\newcommand{\half}{\frac{1}{2}}
\newcommand{\e}{\varepsilon}
\newcommand{\p}{\partial}
\newcommand{\va}{\enspace}
\newcommand{\Om}{\Omega}
\newcommand{\N}{\mathbb{N}}
\newcommand{\Rnn}{\mathbb{R}^{n}}
\newcommand{\R}{\mathbb{R}}
\newcommand{\chr}[2]{\Gamma^{#1}_{\hphantom{#1}#2}}
\newcommand{\chrhat}[2]{\hat{\Gamma}^{#1}_{\hphantom{#1}#2}}
\DeclareMathOperator{\spt}{spt}
\DeclareMathOperator{\dive}{div}
\DeclareMathOperator{\Tr}{Tr}
\DeclarePairedDelimiterX{\norm}[1]{\lVert}{\rVert}{#1}
\renewenvironment{proof}[1][\proofname]{%
	\par\pushQED{\qed}\normalfont%
	\topsep6\p@\@plus6\p@\relax
	\trivlist\item[\hskip\labelsep\bfseries#1\@addpunct{.}]%
	\ignorespaces
}{%
	\popQED\endtrivlist\@endpefalse
}
\newtheorem{thm}{Theorem}[section]
\newtheorem{lemma}[thm]{Lemma}
\newtheorem{defin}[thm]{Definition}
\theoremstyle{definition}
\numberwithin{equation}{section}
\author{Janne Nurminen}
\address{Department of Mathematics and Statistics, University of Jyväskylä}
\email{janne.s.nurminen@jyu.fi}
\begin{document}
	
	\title[AN INVERSE PROBLEM FOR THE MINIMAL SURFACE EQUATION]{AN INVERSE PROBLEM FOR THE MINIMAL SURFACE EQUATION IN THE PRESENCE OF A RIEMANNIAN METRIC}

	\begin{abstract}
		In this work we study an inverse problem for the minimal surface equation on a Riemannian manifold $(\R^{n},g)$ where the metric is of the form $g(x)=c(x)(\hat{g}\oplus e)$. Here $\hat{g}$ is a simple Riemannian metric on $\R^{n-1}$, $e$ is the Euclidean metric on $\R$ and $c$ a smooth positive function. We show that if we know the associated Dirichlet-to-Neumann maps corresponding to metrics $g$ and $\tilde{c}g$, then the Taylor series of the conformal factor $\tilde{c}$ at $x_n=0$ is equal to a positive constant. We also show a partial data result when $n=3$.
		
		\vspace{5pt}
		\noindent
		\textbf{Keywords.} Inverse problem, higher order linearization, quasilinear elliptic equation, minimal surface equation
	\end{abstract}
	\maketitle
	
	\tableofcontents

	
	\section{Introduction}
	
	This article focuses on an inverse problem for the minimal surface equation (MSE), which is a quasilinear elliptic PDE. In particular we consider MSE on a manifold $(\Rnn,g), n\geq3$, where
	\begin{equation}\label{metricg_intro}
		g(x',x_n)=c(x',x_n)\begin{pmatrix}
			\hat{g}(x') & 0 \\
			0 & 1
		\end{pmatrix}.
	\end{equation}
	Here $(x',x_n)\in\R^{n-1}\times\R$, $\hat{g}$ is a Riemannian metric on $\R^{n-1}$ and $c\in C^{\infty}(\Rnn),$ $c(x)>0$ for all $x\in\Rnn$. Inverse problems for the MSE were considered in \cite{Nurminen2022} where $\hat{g}$ was Euclidean and in \cite{Carstea2022} where $c=1$. Now we consider metrics of the form \eqref{metricg_intro} which includes both of these cases. Metrics with local coordinates of the form \eqref{metricg_intro} were introduced for example in \cite{Ferreira2009b}, \cite{Ferreira2016} where the Calderón problem was studied.
	
	Let $\Om\subset\R^{n-1}$ be a bounded domain. In order to define a minimal surface, we introduce notation for Riemannian Hessian and norm of the gradient
	\begin{align*}
		\nabla_g^2u=\left(\partial^2_{x_ix_j}u - \chr{m}{ij}\partial_{x_m}u\right)_{i,j=1}^n,\quad |\nabla_gu|_g^2=g^{ij}\partial_{x_i}u\partial_{x_j}u.
	\end{align*}
	We use the Einstein summation convention where there is no chance to misinterpret. Also here $\chr{m}{ij}$ are the standard Christoffel symbols.
	Now define the Laplace-Beltrami operator to be the trace of the Hessian
	\begin{equation*}
		\Delta_gu=\Tr(\nabla_g^2u)=g^{ij}\left(\partial^2_{x_ix_j}u-\chr{m}{ij}\partial_{x_m}u\right).
	\end{equation*}
	Then for the metric \eqref{metricg_intro} the MSE has the form
	\begin{align}\label{mse_intro}
		&\dive_g\left(\frac{\nabla_{g}u}{\left(1+|\nabla_{g}u|_g^2\right)^{\half}}\right) \\\notag
	&+ \frac{\Delta_{g}u(1-c^{-1}) + \frac{1}{2c}(\nabla_{g}u)^j\p_{x_j}c(1 + |\nabla_{g}u|_g^2 - c^{-2}) + \frac{n-1}{2c^3}\p_{x_n}c(1 + |\nabla_{\hat{g}}u|^2_{\hat{g}})}{\left(1+|\nabla_{g}u|_g^2\right)^{\frac{3}{2}}} = 0.
	\end{align}
	For the derivation of this equation, see Section \ref{section derivation}. When $c=1$, one gets the MSE from \cite{Carstea2022}. The graph of a function $u\colon\Om\subset\R^{n-1}\to\R$ is called a \textit{minimal surface} if it satisfies \eqref{mse_intro} for all $x'\in\Om$.
	
	We will look at the boundary value problem
	\begin{equation}\label{BVP_main_intro}
		\left\{\begin{array}{ll}
			F(x',u,\nabla_g u,\nabla_g^2u)=0 & \text{in}\,\, \Om \\
			u=f, & \text{on}\,\, \partial\Om
		\end{array} \right.
	\end{equation}
	where $F\colon \R^{n^2} \to \R,$
	\begin{align*}\label{def_F}
		F(x',u,\nabla_g u,\nabla_{g}^2u):=&  \left(- \Delta_{\hat{g}}u + \frac{1-n}{2c}\hat{g}^{mr}\p_{x_r}c\p_{x_m}u + \frac{n-1}{2c}\p_{x_n}c\right)(1 + |\nabla_{\hat{g}}u|^2_{\hat{g}}) \\
		&+ \nabla_{\hat{g}}^2u(\nabla_{\hat{g}}u,\nabla_{\hat{g}}u)
	\end{align*}
	(this is an equivalent formulation of \eqref{mse_intro}, see Section \ref{section derivation}).
	One can show that \eqref{BVP_main_intro} is well-posed for small enough boundary data $f$ for example by following the arguments in \cite{MR4188325}. In particular one can show that there exist $C, \delta>0$ and $s>3$, $s\notin\N$, such that for all 
	\[
	f\in U_{\delta}:=\{h\in C^s(\partial\Om):||h||_{C^s(\partial\Om)}<\delta\}
	\]
	there is a unique small solution $u_f$ in $\{v\in C^s(\bar{\Om}):||v||_{C^s(\bar{\Om})}<C\delta\}$. In addition to full data we will be considering partial data and for this let $\Gamma\subset\p\Om$ be a nonempty open subset of the boundary. We can now define a (partial) Dirichlet-to-Neumann map (DN map) $\Lambda_g^{\Gamma}\colon C_c^s(\Gamma)\to C^{s-1}(\Gamma),$
	\begin{equation}\label{DN_map_intro}
		\Lambda_g^{\Gamma}f\mapsto \p_{\nu}u_f|_{\Gamma}
	\end{equation}
	where $u_f$ is the unique small solution corresponding to the boundary value $f$, $\p_{\nu}u=\hat{g}^{ij}\p_{x_i}u\nu_j$ and $\nu$ is the unit normal of $\p\Om$. When $\Gamma=\p\Om$, we will denote $\Lambda_g=\Lambda_g^{\p\Om}$.
	
	The inverse problem we study in this work is the following: Given the knowledge of the partial DN map for two metrics in the same conformal class, does it hold that the metrics are the same? We note that when trying to recover the metric from boundary measurements one expects an obstruction to uniqueness by a boundary fixing diffeomorphism. When the metrics are in the same conformal class such an obstruction is not present, since the only diffeomorphism that leaves a conformal class invariant is the identity. In this article we will give some partial answers to this inverse problem.
	
	Before stating the results, we will introduce the class of \textit{admissible metrics} \cite{Ferreira2009b}.
	\begin{defin}
		A Riemannian metric $g$ is called admissible if on $\Rnn$ it is of the form \eqref{metricg_intro} and $(\Om,\hat{g})$ is a simple manifold.
	\end{defin}


	\begin{defin}
		A compact manifold $(\hat{M},\hat{g})$ with boundary is called simple if for any $p\in\hat{M}$ the exponential map $\exp_p$ with its maximal domain of definition in $T_p\hat{M}$ is a diffeomorphism onto $\hat{M}$, and if $\p\hat{M}$ is strictly convex in the sense that its second fundamental form is positive definite.
	\end{defin}
	
	The first result is for the full data case.
	
	\begin{thm}\label{main thm}
		Let $(\Rnn,g)$, $n\geq3$, be a Riemannian manifold where $g$ is as in \eqref{metricg_intro} with $\p_{x_n}c(x',0) = \p_{x_n}^2c(x',0) = 0$ and let $\Om\subset\R^{n-1}$ be a bounded domain with $C^\infty$ boundary. When $n=3$ assume that $\Om$ is simply connected and when $n>3$ assume that $\hat{g}$ is admissible. Let $\tilde{c}\in C^{\infty}(\Rnn)$ be such that $\p_{x_n}\tilde{c}(x',0) = \p_{x_n}^2\tilde{c}(x',0) = 0$ and let $\p_{x_n}^k\tilde{c}(x',0)|_{\p\Om}=0$ for $k\geq3$. Assume that $\Lambda_g(f) = \Lambda_{\tilde{c}g}(f)$ for all $f\in U_{\delta}$, where $\delta>0$ is sufficiently small. Then
		\begin{equation*}
			\tilde{c}(x',0) = \lambda, \quad \p_{x_n}^k\tilde{c}(x',0) = 0
		\end{equation*}
		for some $\lambda>0$ and for all $k>2$.
		
		Thus, if $\tilde{c}$ is real analytic with respect to $x_n$, then $\tilde{c}(x)=\lambda$ for all $x\in\Om\times\R$.
	\end{thm}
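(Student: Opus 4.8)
The plan is to use the higher order linearization method, which is by now standard for inverse problems for quasilinear equations. The idea is to probe the nonlinear DN map with boundary data $f = \sum_{j=1}^N \e_j f_j$ depending on small parameters $\e_1,\dots,\e_N$, and then differentiate the equation $\Lambda_g(f) = \Lambda_{\tilde c g}(f)$ with respect to the $\e_j$'s at $\e = 0$. Since the solution depends smoothly on the parameters (by well-posedness of \eqref{BVP_main_intro}) and the solution with zero boundary data is $u \equiv 0$, each order of differentiation produces a linear elliptic equation for the derived quantities, with source terms built from lower-order linearizations. The first linearization gives the linear equation associated to the operator obtained by linearizing $F$ at $u=0$; one checks this is (a conformal multiple of) the Laplace--Beltrami operator $\Delta_{\hat g}$ on $(\Om,\hat g)$ together with a gradient and zeroth order term coming from the $\p_{x_n}c$ terms in $F$. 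Because $\p_{x_n}c(x',0)=\p_{x_n}^2 c(x',0)=0$ and likewise for $\tilde c$, the hypotheses are arranged so that the leading linearized operators for $g$ and $\tilde c g$ actually coincide (the conformal factor and its first two normal derivatives being the only data entering at the relevant orders), so the first and second linearizations carry no information, and one must go to the second or third order linearization to see $\tilde c$.

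Next I would carry out the key step: at the order where $\tilde c$ first enters, the equality of DN maps yields an integral identity of the form $\int_\Om (\text{something involving } \tilde c - \text{const}) \, v_1 v_2 \cdots \, dV_{\hat g} = 0$ for all products of solutions $v_j$ of the linearized (homogeneous) equation $\Delta_{\hat g} v_j = 0$ on $(\Om,\hat g)$ with $v_j$ supported appropriately on $\Gamma$ or $\p\Om$. This is where admissibility of $\hat g$ is used: for a simple manifold $\hat g$ one has access to a rich family of such harmonic functions, in particular complex geometric optics solutions and, crucially, the injectivity of the (attenuated/geodesic) ray transform, or in the $n=3$ simply connected case the density of products of harmonic functions. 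The machinery of \cite{Ferreira2009b} (and \cite{Ferreira2016} for the partial data / $n=3$ case) then shows that the integral identity forces the $x_n$-dependent part of $\tilde c$, as encoded in its Taylor coefficients $\p_{x_n}^k \tilde c(x',0)$, to vanish for $k > 2$ and forces $\tilde c(x',0)$ to be independent of $x'$, hence equal to a constant $\lambda$; positivity of $\lambda$ is immediate since $\tilde c > 0$. Peeling off the Taylor coefficients one at a time — each order of linearization recovering the next coefficient — completes the argument, and the final real-analyticity statement is then a trivial consequence since a function real analytic in $x_n$ is determined by its Taylor series at $x_n = 0$.

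The main obstacle I anticipate is the bookkeeping in the higher order linearization together with the derivation of the precise integral identity: one must carefully track which terms of $F$ (the Hessian term $\nabla_{\hat g}^2 u(\nabla_{\hat g} u,\nabla_{\hat g} u)$, the $|\nabla_{\hat g} u|_{\hat g}^2$ factor multiplying the linear part, and the $\p_{x_n}c$ terms) contribute at each order, verify that the contributions from $g$ cancel against all but the "conformal factor difference" contribution from $\tilde c g$, and confirm that the resulting test-function class is exactly the one for which the ray transform / product-density results apply. A secondary technical point is the partial data case $n=3$: there one cannot use the full CGO machinery of \cite{Ferreira2009b} directly and must instead invoke the simply-connectedness of $\Om$ to reduce to a density statement for products of harmonic functions vanishing on $\p\Om \setminus \Gamma$, in the spirit of \cite{Ferreira2016}; ensuring the integral identity is compatible with the restricted support is the delicate part there.
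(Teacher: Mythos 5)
Your overall strategy (higher order linearization plus density of products of solutions) matches the paper's, but there is a genuine gap in the way you propose to recover $\tilde c(x',0)=\lambda$, and a related misidentification of the linearized operator.

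You assert that because $\p_{x_n}c(x',0)=\p_{x_n}^2 c(x',0)=0$ (and likewise for $\tilde c$) the first linearizations for $g$ and $\tilde c g$ ``actually coincide'' and hence ``the first and second linearizations carry no information.'' This is not correct. The first linearized operator is not $\Delta_{\hat g}$ alone but an advection--diffusion operator of the form $-\Delta_{\hat g}v^l + \tfrac{1-n}{2c(x',0)}\hat g^{ij}\p_{x_i}c(x',0)\p_{x_j}v^l$, and the conformal factor enters through the \emph{tangential} derivatives $\p_{x_i}\tilde c(x',0)$, $i=1,\dots,n-1$, which the hypotheses do \emph{not} force to vanish. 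Consequently the first linearizations for $g$ and $\tilde c g$ differ precisely by the extra drift term $\tfrac{1-n}{2\tilde c(x',0)}\hat g^{ij}\p_{x_i}\tilde c(x',0)\p_{x_j}$. The paper extracts $\tilde c(x',0)=\lambda$ exactly from this first linearization by invoking uniqueness results for the advection term in such equations (Guillarmou--Tzou for $n=3$ after passing through the Poincar\'e lemma on the simply connected $\Om$, Krupchyk--Uhlmann on admissible manifolds for $n>3$). Without this step you have no way to conclude $\tilde c(x',0)$ is constant, and in fact the subsequent integral identities at higher order are computed relative to a linearized operator that you have mis-specified, so the ``products of harmonic functions'' density you invoke is the wrong tool: the correct density statement is for products of solutions to the magnetic Schr\"odinger equation obtained by rewriting $\Delta_{\hat g}+X$, which is what Lemma~\ref{lemma_density} supplies.

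A second, smaller gap: in your sketch the integral identity appears as $\int_\Om (\cdot) v_1v_2\cdots\,dV_{\hat g}=0$, but obtaining it requires pairing the difference of the $(K+1)$-st linearized equations against a fixed test function $v^{(0)}$ solving the \emph{adjoint} of the linearized equation, integrating by parts, and using that the normal derivatives of the linearized solutions agree on the boundary; the factor $v^{(0)}$ (produced via Runge approximation so that $v^{(0)}(x_0')\neq 0$) is essential to localize at the point $x_0'$ after applying the density lemma. Your proposal omits this device, and without it the conclusion $\p_{x_n}^{N+2}\tilde c(x_0',0)=0$ does not follow from the vanishing of the integral.
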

	
	For the partial data we will only consider the case $n=3$. For $n>3$ we would be able to get some results, but the partial data results for the magnetic Schrödinger equation (see \cite{Selim2023}) are more restrictive and hence we do not record these results here.
	
	\begin{thm}\label{main thm_partial}
		Let $(\R^3,g)$, $\tilde{c}$ and $\Om$ be as in Theorem \ref{main thm} and let $\Gamma\subset\p\Om$, $\Gamma\neq\emptyset$, be open. Assume that $\Lambda_g^{\Gamma}(f) = \Lambda_{\tilde{c}g}^{\Gamma}(f)$ for all $f\in U_{\delta}$, where $\delta>0$ is sufficiently small. Then
		\begin{equation*}
			\tilde{c}(x',0) = \lambda, \quad \p_{x_3}^k\tilde{c}(x',0) = 0
		\end{equation*}
		for some $\lambda>0$ and for all $k>2$.
		
		Thus, if $\tilde{c}$ is real analytic with respect to $x_3$, then $\tilde{c}(x)=\lambda$ for all $x\in\Om\times\R$.
	\end{thm}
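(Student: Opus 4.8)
The strategy is the standard higher-order linearization scheme adapted to the partial-data setting in dimension three. I would start by inserting boundary data of the form $f = \sum_{j=1}^{N}\e_j f_j$ with $f_j\in C_c^s(\Gamma)$ and $\e_j$ small, writing $u = u(\e_1,\dots,\e_N)$ for the corresponding small solution, and differentiating the equation $F(x',u,\nabla_g u,\nabla_g^2 u)=0$ with respect to the parameters at $\e=0$. Since $u|_{\e=0}=0$, the first linearization $v_j := \p_{\e_j}u|_{\e=0}$ solves the linearized equation $-\Delta_{\hat g}v_j + \tfrac{1-n}{2c}\hat g^{mr}\p_{x_r}c\,\p_{x_m}v_j = 0$ in $\Om$ with $v_j=f_j$ on $\p\Om$; this is a (conjugated) Schrödinger-type equation, and the same holds with $c$ replaced by $\tilde c c$. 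The hypothesis $\Lambda_g^\Gamma = \Lambda_{\tilde c g}^\Gamma$ together with standard boundary determination forces the lower-order coefficients to agree, so the first-order linearized operators coincide — in particular we may take the same family of solutions $v_j$ for both metrics. Here is where the hypotheses $\p_{x_n}c(x',0)=\p_{x_n}^2 c(x',0)=0$ (and likewise for $\tilde c$) enter: they guarantee that at $x_n=0$ the "extra" terms in $F$ and their first two $x_n$-derivatives vanish, so that the linearized problems restricted to the slice $\{x_n=0\}$ are genuinely the ones above with no contribution from the conformal factor in the leading part.

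Next I would pass to the second-order linearization. Differentiating $F=0$ in $\e_j\e_k$ and using that the first-order operators agree, the difference of the two second linearizations produces an identity of the form: an elliptic equation for $w := \p_{\e_j}\p_{\e_k}(u-\tilde u)|_{\e=0}$ whose source term is an explicit bilinear expression in $v_j, v_k$ with coefficients built from $\tilde c(x',0)$ and its $x_n$-derivatives at $x_n=0$. Since the DN maps agree, $w$ has zero Cauchy data on $\Gamma$, and the standard integration-by-parts (Alessandrini-type) identity turns this into an integral orthogonality relation
\[
\int_{\Om} \Big(\text{coefficient involving }\tilde c(x',0),\,\p_{x_n}\tilde c,\dots\Big)\, v_j\, v_k\, v_\ell \, dV_{\hat g} = 0
\]
valid for all triples of solutions $v_j,v_k,v_\ell$ of the linearized equation vanishing outside $\Gamma$. (One uses three factors because the first-order terms in $F$ contain the quadratic-in-gradient corrections; the precise count comes from the structure of $F$ as written.) The task is then to show this forces the coefficient — which after the reductions is essentially $1-\tilde c(x',0)^{-1}$ times something nonvanishing, plus the $\p_{x_n}^k\tilde c(x',0)$ terms — to be constant and the higher derivatives to vanish.

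The main obstacle, and the reason $n=3$ is singled out, is the density/completeness step: one must know that products $v_j v_k v_\ell$ of solutions of the linearized Schrödinger equation supported away from $\Gamma$ are dense enough in $\Om$ to conclude that the coefficient vanishes. For $n\geq 3$ with $\hat g$ admissible this is exactly the Ferreira–Kenig–Salo–Uhlmann machinery using complex geometric optics solutions concentrating along geodesics, and it reduces matters to injectivity of the (attenuated/geodesic ray) transform on the simple manifold $(\Om,\hat g)$; for $n=3$ and $\Om$ simply connected the relevant partial-data statement is the one for the magnetic Schrödinger equation from \cite{Selim2023}, which is why we borrow that result wholesale. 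Concretely I would: (i) invoke the partial-data CGO construction of \cite{Selim2023} to get the required family of $v_j$'s concentrating on a fixed geodesic in the transversal variable while oscillating in $x_n$; (ii) plug the product into the orthogonality identity and read off, by varying the Fourier frequency in $x_n$, that each Taylor coefficient $\p_{x_n}^k\tilde c(x',0)$ satisfies a transport/transform equation in $x'$; (iii) use the boundary vanishing $\p_{x_n}^k\tilde c(x',0)|_{\p\Om}=0$ for $k\geq 3$ and injectivity of the transform to conclude $\p_{x_n}^k\tilde c(x',0)=0$ for $k>2$ and $\tilde c(x',0)\equiv\lambda$ constant; positivity of $\lambda$ is immediate since $\tilde c>0$. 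The real-analyticity corollary is then automatic: vanishing of all $x_n$-derivatives at $x_n=0$ plus analyticity in $x_n$ gives $\tilde c(x)=\lambda$ on $\Om\times\R$. I expect the bookkeeping in step (ii) — disentangling the various coefficients produced by the second linearization of the rather complicated $F$ in \eqref{mse_intro} and matching powers of the $x_n$-frequency — to be the most delicate part, even though each individual manipulation is routine.
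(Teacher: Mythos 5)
Your overall architecture (higher-order linearization, first linearization to pin down the lower-order coefficient, second and higher linearizations producing an orthogonality identity, then a density step) tracks the paper's, but the crucial step — the partial-data density/identification input in dimension $n=3$ — is pointed at the wrong result, and this would derail the proof. You cite \cite{Selim2023} as the relevant partial-data tool for the magnetic Schr\"odinger equation and propose to run the CGO/geodesic-ray-transform machinery of Ferreira--Kenig--Salo--Uhlmann. But the paper explicitly sets the $n>3$ partial-data case aside precisely because \cite{Selim2023} is ``more restrictive,'' and \cite{Selim2023} lives on CTA manifolds of dimension $\geq 3$ — it does not apply here. When $n=3$ the linearized equation sits on $\Om\subset\R^2$, i.e.\ the base is two-dimensional, and the correct tools are the two-dimensional partial-data results of Tzou \cite{Tzou2017}: Theorem~1.1 there (gauge-invariant recovery of the magnetic/electric potentials) packaged in the paper's Lemma~\ref{lemma_first_lin_identification}, together with the density-of-products statement from Section~7.3 of the same paper, packaged in Lemma~\ref{lemma_density}(1). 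The underlying CGOs are Bukhgeim-type ($\overline{\p}$-based) solutions adapted to the two-dimensional, simply connected setting, not geodesic-concentration ones; the Poincar\'e lemma on the simply connected $\Om$ is what kills the gauge obstruction $\theta^{-1}d\theta$. Without this replacement the density and identification steps would not go through.

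Two further structural discrepancies worth fixing. First, the constancy $\tilde c(x',0)=\lambda$ is extracted already at the \emph{first} linearization, by identifying the advection vector fields $X$ and $\tilde X$ via Lemma~\ref{lemma_first_lin_identification} and Tzou's boundary determination, and then integrating $\p_{x_i}\tilde c(x',0)/\tilde c(x',0)=0$; it is not read off from a coefficient in the second-linearization identity as your sketch suggests. Second, the paper's orthogonality identity is not a genuine triple product of boundary-controlled solutions: it has the form $\int_\Om f\,v^k v^l\,dV_{\hat g}=0$ where $f$ absorbs a single \emph{fixed} adjoint solution $v^{(0)}$ (constructed by Runge approximation to be nonzero at a chosen point and vanishing on $\p\Om\setminus\Gamma$), and only $v^k,v^l$ range over solutions with data in $C_c^\infty(\Gamma)$. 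This is exactly the shape required by Lemma~\ref{lemma_density}(1), which needs $f|_\Gamma=0$ (supplied by the hypothesis $\p_{x_n}^k\tilde c(x',0)|_{\p\Om}=0$ for $k\geq3$). Your proposed three-solution density claim would require a different and stronger input.
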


	These results extend the results from \cite{Nurminen2022}.
	
	We make some comments about the assumptions made in Theorems \ref{main thm} and \ref{main thm_partial}. First of all, the assumption $\p_{x_n}c(x',0) = \p_{x_n}\tilde{c}(x',0) = 0$ is needed for the well-posedness of the boundary value problem \eqref{BVP_main_intro} for small data, that is we want it to be well-posed for both the metric $g$ and the metric $\tilde{c}g$. The assumption $\p_{x_n}^2c(x',0) = \p_{x_n}^2\tilde{c}(x',0) = 0$ is needed in order for the method used in the proof to work and it is not yet known if this could be removed. Lastly, there is a small gauge invariance in \eqref{mse_intro} and also in the DN map. To be more precise, if one replaces the conformal factor $c$ in \eqref{metricg_intro} by $\mu c$, $\mu\neq0$, then both the equation \eqref{mse_intro} and the DN map remain the same. Thus we cannot have $\lambda=1$ in Theorem \ref{main thm}.
	
	There are also some assumptions made when $n=3$. The assumption that $\Om$ is simply connected is needed to use the Poincaré Lemma. This is something that one might be able relax with some amount of work by looking at the so called gauge isomorphism mentioned in \cite{Guillarmou2011} that relates two connection 1-forms. The other assumption that $\p_{x_n}^k\tilde{c}(x',0)|_{\p\Om}=0$ for $k\geq3$ could possibly be removed by doing a boundary determination result.
	
	The proof will use the method of higher order linearization. We will begin by looking at the first linearization of \eqref{BVP_main_intro} and the DN map for that. The first linearization of the DN map corresponds to an advection diffusion type equation. From this it is possible to determine the advection term (\cite{Guillarmou2011} for $n=3$, \cite{Krupchyk2018} for $n>3$) and this will then imply that $\tilde{c}(x',0) = \lambda$ for some $\lambda\in(0,\infty)$. Now fix $x_0\in\Om$ and construct a solution $v^{(0)}$ to the adjoint of the linearized equation so that $v^{(0)}(x_0)\neq0$. Then using the higher order linearizations one can obtain an integral identity
	\begin{equation*}
		\int_{\Om}\tilde{c}(x',0)^{-1}\partial_{x_n}^{N+2}\tilde{c}(x',0)v^{(0)}\prod_{k=1}^{N+1}v^{l_k}\,dx'=0,
	\end{equation*}
	where $N+1$ is the order of linearization and $v^{l_k}$ are solutions to the linearized equation. Then choosing $v^{l_k}\equiv1$ for $k>2$ and using that a product of these solutions form a complete set in $L^1$ (see Lemma \ref{lemma_density}) we can conclude that
	\begin{equation*}
		\tilde{c}(x',0)^{-1}\partial_{x_n}^{N+2}\tilde{c}(x',0)v^{(0)}=0.
	\end{equation*}
	From this, using $v^{(0)}(x_0)\neq0$ and that $x_0$ was arbitrary, we get $\p_{x_n}^{N+2}\tilde{c}(x',0) = 0$. Then one proceeds by induction.
	
	For the partial data, the proof is very similar and has only a few modifications. One of the more significant changes is that instead of using the results of \cite{Guillarmou2011} to determine the advection term, we use the results for partial data in \cite{Tzou2017}.
	
	As mentioned above, the main technique in this work is the method of higher order linearization. This method, which uses the nonlinearity of the partial differential equation as a tool, was first introduced in \cite{MR3802298}  in the case of a nonlinear wave equation and was further developed in \cite{MR4188325}, \cite{MR4104456} for nonlinear elliptic equations. In these works the equation $\Delta u + a(x,u)=0$ was considered and further results for example in partial data and obstacle problems were obtained in \cite{MR4052205}, \cite{MR4269409}. Also there has been multiple works when the nonlinearity is of power type (e.g. \cite{MR4332042}, \cite{Salo2022} and \cite{Nurminen2023}).
	
	At the same time there have been multiple articles concerning inverse problems for other nonlinear elliptic PDEs. For example different cases of nonlinear conductivity equations have been considered in \cite{MR4300916}, \cite{kian2020partial} and in the latter they also consider partial data. For the nonlinear magnetic Schrödinger equation the full data case has been treated on conformally transversally anisotropic manifolds in \cite{Krupchyk2020} and in the Euclidean setting the partial data case is treated in \cite{lai2020partial}.
	
	Linearization has been used before these works mentioned above. It was used in the parabolic case in \cite{MR1233645} where it was shown that the first linearization of the DN map is the DN map for a linear equation. Other nonlinear elliptic cases have been treated, for example in \cite{MR1295934}, \cite{MR1465069}.
	
	The present work focuses on an inverse problem for the minimal surface equation and for this equation there have been some works previously in the literature. The Euclidean case has been treated in \cite{Munoz2020}, in the sense that they deal with a quasilinear conductivity depending on a function $u$ and its gradient $\nabla u$. After that there have been two works on the minimal surface equation in a Riemannian manifold setting, one by the author of this article \cite{Nurminen2022} and another one in \cite{Carstea2022}. These were simultaneously and independently done. The work of this article somehow brings these two articles together in the following sense: In \cite{Nurminen2022} the metric was conformally Euclidean and thus in this article the setting is more general. In \cite{Carstea2022} the authors consider a two dimensional Riemannian manifold $(\Sigma,g)$ and look at a minimal surface $Y\subset \Sigma\times\R$ given as a graph of a function. Thus if $\Sigma\subset\R^2$ and in this article we would have $c\equiv1$, the settings in these two articles would be the same.
	
	There is also the work \cite{Alexakis2020} that is related to minimal surfaces and inverse problems. In that article the authors consider a generalization of the boundary rigidity problem in the sense that instead of measuring distances of boundary points they use measurements related to areas of minimal surfaces.
	
	The rest of this article is organized as follows. In Section \ref{section derivation} we derive the minimal surface equation in the setting mentioned above. Section \ref{section linearizations} is dedicated to calculating the first and second order linearizations of \eqref{BVP_main_intro} and addressing shortly the well-posedness of \eqref{BVP_main_intro}. Finally in Section \ref{section proof of main} we prove the main results and two Lemmas to help in the proof.
	\\
	
	\textbf{Acknowledgements.} The author was supported by the Finnish Centre of Excellence in Inverse Modelling and Imaging (Academy of Finland grant 284715).
	The author would like to thank Mikko Salo for helpful discussions on the minimal surface equation and everything related to inverse problems.

	\section{Deriving the minimal surface equation}\label{section derivation}
	
	In this section we derive the equation \eqref{mse_intro}. This is done similarly as in \cite[Section 3]{Nurminen2022}. Let $(M,g),$ $M=\Rnn$, $n\geq3$, be a Riemannian manifold with the metric
	\begin{equation}\label{metricg}
		g(x',x_n)=c(x',x_n)\begin{pmatrix}
			\hat{g}(x') & 0 \\
			0 & 1
		\end{pmatrix}
	\end{equation}
	where
	\begin{equation*}\label{metricc}
		(x',x_n)\in\R^{n-1}\times\R,\; c\in C^{\infty}(\R^n),\; c(x)>0\quad\text{for all}\quad x\in\R^n
	\end{equation*}
	and $\hat{g}$ is a Riemannian metric on $\R^{n-1}$. 
	We use the standard notations $g_{ij}$ for the matrix $g$ and $g^{ij}$ for the inverse $g^{-1}$.
	These assumptions are valid for the rest of the article, unless otherwise stated.
	
	Let $u\colon \Om\subset\R^{n-1}\to\R,$ $u\in C^2(\bar{\Om})$, and consider the graph of the function $u$
	\begin{equation*}
		\text{Graph}_u=\{(x',u(x'))\colon x'\in\Om\}\subset M.
	\end{equation*}
	This graph is a minimal surface if and only if its mean curvature $H$ is equal to zero at all points on the graph. By defining 
	\begin{equation*}
		f\colon\Om\times\R\to\R,\quad f(x',x_n)=x_n-u(x'),
	\end{equation*}
	the graph of $u$ is the surface 
	\begin{equation*}
		\Sigma:=\{(x',x_n)\in\Om\times\R : f(x',x_n)=0\}.
	\end{equation*}
	The mean curvature of $\Sigma$ at $x\in\Sigma$ is the sum of principal curvatures. We omit the normalizing factor $\frac{1}{n-1}$ when calculating the mean curvature and we use the Einstein summation convention when it does not cause confusion. In order to calculate the principal curvatures, we introduce the Riemannian gradient and Hessian of a function $f\colon M\to\R$:
	\begin{align*}
		\nabla_gf=g^{ij}\partial_{x_i}f\partial_{x_j},\quad \nabla_g^2f=\left(\partial^2_{x_ix_j}f-\chr{m}{ij}\partial_{x_m}f\right)_{i,j=1}^n,
	\end{align*}
	where $g^{ij}$ is the inverse of $g_{ij}$ and $\chr{m}{ij}=\frac{1}{2}g^{ml}(\partial_{x_i}g_{jl}+\partial_{x_j}g_{il}-\partial_{x_l}g_{ij})$ is the Christoffel symbol related to the metric $g$.
	Define also the Laplace-Beltrami operator, which is the trace of the Hessian (this is one way of defining it), and the norm of the gradient:
	\begin{equation*}
		\Delta_gf=\Tr(\nabla_g^2f)=g^{ij}\left(\partial^2_{x_ix_j}f-\chr{m}{ij}\partial_{x_m}f\right),\quad |\nabla_gf|_g^2=g^{ij}\partial_{x_i}f\partial_{x_j}f .
	\end{equation*}
	Now the principal curvatures of $\Sigma$ at $x\in\Sigma$ are the eigenvalues of $\nabla_g^2f(x)$ restricted to the tangent space $T_x\Sigma$ at $x$. Since $\frac{\nabla_g f(x)}{|\nabla_g f(x)|_g}$ is a normal to $\Sigma$ at the point $x$, we have $T_x\Sigma=\{\nabla_g f(x)\}^{\perp}$, or in other words, the tangent space $T_x\Sigma$ is the orthogonal complement of the vector $\nabla_g f(x)$. 
	
	Let $\{E_1,\ldots,E_{n-1}\}$ be an $g$-orthonormal basis of $T_x\Sigma$. Then $\left\{E_1,\ldots,E_{n-1},\frac{\nabla_g f(x)}{|\nabla_g f(x)|_g}\right\}$ is an orthonormal basis of $\Rnn$. Now the mean curvature of $\Sigma$ at $x\in\Sigma$ is the trace of $\nabla_g^2f(x)|_{\{\nabla_g f(x)\}^{\perp}}$:
	\begin{align*}
		H(x)&=\sum_{i=1}^{n-1}\langle\nabla_g^2f(x)E_i,E_i\rangle\\
		&=\sum_{i=1}^{n-1}\langle\nabla_g^2f(x)E_i,E_i\rangle+\left(\nabla_g^2f(x)\right)\left(\frac{\nabla_g f(x)}{|\nabla_g f(x)|_g},\frac{\nabla_g f(x)}{|\nabla_g f(x)|_g}\right)\\
		&-\left(\nabla_g^2f(x)\right)\left(\frac{\nabla_g f(x)}{|\nabla_g f(x)|_g},\frac{\nabla_g f(x)}{|\nabla_g f(x)|_g}\right)\\
		&=\Tr(\nabla_g^2f(x))-|\nabla_gf(x)|_g^{-2}\left(\nabla_g^2f(x)\right)\left(\nabla_g f(x),\nabla_g f(x)\right)\\
		&=\Delta_gf(x)-|\nabla_gf(x)|_g^{-2}\left(\nabla_g^2f(x)\right)\left(\nabla_g f(x),\nabla_g f(x)\right).
	\end{align*}
	Thus $\text{Graph}_u$ is a minimal surface if and only if 
	\begin{equation}\label{general minsurfeq}
		|\nabla_gf(x)|_g^2\Delta_gf(x) - \left(\nabla_g^2f(x)\right)\left(\nabla_g f(x),\nabla_g f(x)\right)=0\quad\text{for all}\va x\in\text{Graph}_u.
	\end{equation}
	
	Next we will calculate the minimal surface equation more explicitly using the metric \eqref{metricg}. Now 
	\begin{equation*}
		g^{-1}=c^{-1}\begin{pmatrix}
			\hat{g}^{-1} & 0 \\
			0 & 1
		\end{pmatrix}
	\end{equation*}
	is the inverse matrix of \eqref{metricg}. Let us calculate the first term of \eqref{general minsurfeq}:
	\begin{align}\label{gen_minsurf_p1}
		&c^2|\nabla_gf(x)|_g^2\Delta_gf(x)\\\notag
		&= c^2g^{ij}(\delta_{in}-\p_{x_i}u)(\delta_{jn}-\p_{x_j}u)g^{kl}\left(-\p^2_{x_kx_l}u - \chr{m}{kl}(\delta_{mn}-\p_{x_m}u)\right)\\\notag
		&= c^2g^{ij}(\delta_{in}\delta_{jn} - \delta_{in}\p_{x_j}u - \delta_{jn}\p_{x_i}u + \p_{x_i}u\p_{x_j}u)g^{kl}(-\p^2_{x_kx_l}u - \chr{n}{kl} + \chr{m}{kl}\p_{x_m}u)\\\notag
		&=c\left(1+|\nabla_{\hat{g}}u|_{\hat{g}}^2\right)\left(- \Delta_{g}u - g^{kl}\chr{n}{kl}\right).
	\end{align}
	The second term of \eqref{general minsurfeq} becomes
	\begin{align}\label{gen_minsurf_p2}
		&c^2\left(\nabla_g^2f(x)\right)\left(\nabla_g f(x),\nabla_g f(x)\right)\\\notag
		&= c^2(\p_{x_ix_j}^2f - \chr{m}{ij}\p_{x_m}f)g^{ai}\p_{x_a}fg^{bj}\p_{x_b}f\\\notag
		&= c^2(-\p_{x_ix_j}^2u - \chr{n}{ij} + \chr{m}{ij}\p_{x_m}u)g^{ai}g^{bj}(\delta_{an}\delta_{bn} - \delta_{an}\p_{x_b}u - \delta_{bn}\p_{x_a}u + \p_{x_a}u\p_{x_b}u)\\\notag
		&= c^2(-\p_{x_ix_j}^2u - \chr{n}{ij} + \chr{m}{ij}\p_{x_m}u)(g^{ni}g^{nj} - g^{ni}g^{bj}\p_{x_b}u - g^{ai}g^{nj}\p_{x_a}u + g^{ai}g^{bj}\p_{x_a}u\p_{x_b}u)\\\notag
		&= \chr{m}{nn}\p_{x_m}u - \chr{n}{nn} - 2(\chr{m}{nj}\p_{x_m}u - \chr{n}{nj})cg^{bj}\p_{x_b}u\\\notag
		&+ c^2(-\p_{x_ix_j}^2u - \chr{n}{ij} + \chr{m}{ij}\p_{x_m}u)g^{ai}g^{bj}\p_{x_a}u\p_{x_b}u\\\notag
		&= \chr{m}{nn}\p_{x_m}u - \chr{n}{nn} - 2(\chr{m}{nj}\p_{x_m}u - \chr{n}{nj})\left(\nabla_{\hat{g}}u\right)^j\\\notag
		&- \nabla_{g}^2u(\nabla_{\hat{g}}u,\nabla_{\hat{g}}u) - \chr{n}{ij}\left(\nabla_{\hat{g}}u\right)^i\left(\nabla_{\hat{g}}u\right)^j.
	\end{align}
	In order to simplify further we will calculate the Christoffel symbols more explicitly:
	\begin{align*}
		2\chr{m}{ij}&= g^{mr}(\p_{x_j}g_{ir} + \p_{x_i}g_{jr}-\p_{x_r}g_{ij}),\\
		2\chr{m}{nn}&= g^{mr}(2\p_{x_n}g_{nr} - \p_{x_r}g_{nn}) = 2g^{mn}\p_{x_n}g_{nn} - g^{mr}\p_{x_r}g_{nn} \\
		&= 2g^{mn}\p_{x_n}c - g^{mr}\p_{x_r}c ,\\
		2\chr{n}{ij}&= g^{nr}(\p_{x_j}g_{ir} + \p_{x_i}g_{jr} - \p_{x_r}g_{ij}) = g^{nn}(\p_{x_j}g_{in} + \p_{x_i}g_{jn} - \p_{x_n}g_{ij}) \\
		&= c^{-1}(\p_{x_j}g_{in} + \p_{x_i}g_{jn} - \p_{x_n}g_{ij}) ,\\
		2\chr{m}{nj}&= g^{mr}(\p_{x_j}g_{nr} + \p_{x_n}g_{jr}-\p_{x_r}g_{nj}) ,\\
		2\chr{n}{nj}&= g^{nr}(\p_{x_j}g_{nr} + \p_{x_n}g_{jr}-\p_{x_r}g_{nj}) = g^{nn}(\p_{x_j}g_{nn} + \p_{x_n}g_{jn} - \p_{x_n}g_{nj}) \\
		&= c^{-1}\p_{x_j}c.
	\end{align*}
	Inserting these into \eqref{gen_minsurf_p1} we have
	\begin{align*}
		&c^2|\nabla_gf(x)|_g^2\Delta_gf(x)\\\notag
		&= c\left(1 + |\nabla_{\hat{g}}u|_{\hat{g}}^2\right)(- \Delta_{g}u - \half g^{kl}c^{-1}(\p_{x_l}g_{kn} + \p_{x_k}g_{ln} - \p_{x_n}g_{kl}))\\\notag
		&= \left(1 + |\nabla_{\hat{g}}u|_{\hat{g}}^2\right)(- c\Delta_{g}u - \half (g^{nl}\p_{x_l}c + g^{kn}\p_{x_k}c - g^{kl}\p_{x_n}g_{kl}))\\\notag
		&= \left(1 + |\nabla_{\hat{g}}u|_{\hat{g}}^2\right)(- c\Delta_{g}u - \half c^{-1}(2 - n)\p_{x_n}c).
	\end{align*}
	For calculating the linearizations in Section \ref{section linearizations} it is useful to transform $\Delta_{g}u$ into $\Delta_{\hat{g}}u$:
	\begin{align*}
		\Delta_{g}u&= g^{ij}\left(\p_{x_ix_j}^2u - \half g^{mr}(\p_{x_j}g_{ir} + \p_{x_i}g_{jr} - \p_{x_r}g_{ij})\p_{x_m}u\right) \\
		&= c^{-1}\hat{g}^{ij}\p_{x_ix_j}^2u - \frac{1}{2c}\sum_{m,r=1}^{n-1}\hat{g}^{mr}g^{ij}(\p_{x_j}g_{ir} + \p_{x_i}g_{jr} - \p_{x_r}g_{ij})\p_{x_m}u \\
		&= c^{-1}\hat{g}^{ij}\p_{x_ix_j}^2u - \frac{1}{2c}\sum_{m,r=1}^{n-1}\hat{g}^{mr}\left(\hat{g}^{ij}((\p_{x_j}\hat{g}_{ir} + \p_{x_i}\hat{g}_{jr} - \p_{x_r}\hat{g}_{ij})) + \frac{2-n}{c}\p_{x_r}c\right)\p_{x_m}u \\
		&=c^{-1}\Delta_{\hat{g}}u - \frac{2-n}{2c^2}\hat{g}^{mr}\p_{x_r}c\p_{x_m}u.
	\end{align*}
	Thus
	\begin{align*}
		&c^2|\nabla_gf(x)|_g^2\Delta_gf(x)\\\notag
		&= \left(1 + |\nabla_{\hat{g}}u|_{\hat{g}}^2\right)\left(- \Delta_{\hat{g}}u + \frac{2-n}{2c}\hat{g}^{mr}\p_{x_r}c\p_{x_m}u - \half c^{-1}(2 - n)\p_{x_n}c\right).
	\end{align*}
	Now the second part \eqref{gen_minsurf_p2} becomes
	\begin{align*}
		&c^2\left(\nabla_g^2f(x)\right)\left(\nabla_g f(x),\nabla_g f(x)\right) \\
		&= - \half g^{mr}\p_{x_r}c\p_{x_m}u - \half g^{nn}\p_{x_n}c \\
		&- (g^{mr}(\p_{x_j}g_{nr} + \p_{x_n}g_{jr}-\p_{x_r}g_{nj})\p_{x_m}u - c^{-1}\p_{x_j}c)(\nabla_{\hat{g}}u)^j \\
		&- \nabla_{g}^2u(\nabla_{\hat{g}}u,\nabla_{\hat{g}}u) - \half c^{-1}(\p_{x_j}g_{in} + \p_{x_i}g_{jn} - \p_{x_n}g_{ij})(\nabla_{\hat{g}}u)^i(\nabla_{\hat{g}}u)^j \\
		&= -\half c^{-1}\sum_{m,r=1}^{n-1}\hat{g}^{mr}\p_{x_r}c\p_{x_m}u - \half c^{-1}\p_{x_n}c \\
		&- \sum_{m,r=1}^{n-1}c^{-1}\hat{g}^{mr}\hat{g}_{jr}\p_{x_n}c\p_{x_m}u(\nabla_{\hat{g}}u)^j + c^{-1}\p_{x_j}c(\nabla_{\hat{g}}u)^j \\
		&- \nabla_{g}^2u(\nabla_{\hat{g}}u,\nabla_{\hat{g}}u) + \half c^{-1}\p_{x_n}c\hat{g}_{ij}(\nabla_{\hat{g}}u)^i(\nabla_{\hat{g}}u)^j \\
		&=  \half c^{-1}\sum_{m,r=1}^{n-1}\hat{g}^{mr}\p_{x_r}c\p_{x_m}u - \half c^{-1}\p_{x_n}c(1 + |\nabla_{\hat{g}}u|^2_{\hat{g}}) - \nabla_{g}^2u(\nabla_{\hat{g}}u,\nabla_{\hat{g}}u).
	\end{align*}
	As before we would like to modify $\nabla_{g}^2u(\nabla_{\hat{g}}u,\nabla_{\hat{g}}u)$ to have $\nabla_{\hat{g}}^2u(\nabla_{\hat{g}}u,\nabla_{\hat{g}}u)$:
	\begin{align*}
		&\nabla_{g}^2u(\nabla_{\hat{g}}u,\nabla_{\hat{g}}u) \\
		&= (\p_{x_ix_j}^2u - \half g^{mr}(\p_{x_i}g_{rj} + \p_{x_j}g_{ri} - \p_{x_r}g_{ji})\p_{x_m}u)\hat{g}^{ia}\p_{x_a}u\hat{g}^{jb}\p_{x_b}u.
	\end{align*}
	Since $\p_{x_n}u=0$, all the indices run from $1$ to $n-1$. Hence
	\begin{align*}
		&\nabla_{g}^2u(\nabla_{\hat{g}}u,\nabla_{\hat{g}}u) \\
		&= (\p_{x_ix_j}^2u - \frac{1}{2c} \hat{g}^{mr}(\p_{x_i}c\hat{g}_{rj} + c\p_{x_i}\hat{g}_{rj} + \p_{x_j}c\hat{g}_{ri} + c\p_{x_j}\hat{g}_{ri} \\
		&- \p_{x_r}c\hat{g}_{ij} - c\p_{x_r}\hat{g}_{ij})\p_{x_m}u)\hat{g}^{ia}\p_{x_a}u\hat{g}^{jb}\p_{x_b}u \\
		&= (\p_{x_ix_j}^2u - \chrhat{m}{ij}\p_{x_m}u - \frac{1}{2c}(\delta^m_j\p_{x_i}c + \delta^m_i\p_{x_j}c - \hat{g}^{mr}\hat{g}_{ij}\p_{x_r}c)\p_{x_m}u)\hat{g}^{ia}\p_{x_a}u\hat{g}^{jb}\p_{x_b}u \\
		&= \nabla_{\hat{g}}^2u(\nabla_{\hat{g}}u,\nabla_{\hat{g}}u) - \frac{1}{2c}(\hat{g}^{ia}\p_{x_i}c\p_{x_a}u\hat{g}^{mb}\p_{x_m}u\p_{x_b}u + \hat{g}^{jb}\p_{x_j}c\p_{x_b}u\hat{g}^{mb}\p_{x_m}u\p_{x_a}u \\
		&- \hat{g}^{mr}\p_{x_r}c\p_{x_m}u\hat{g}^{jb}\p_{x_j}u\p_{x_b}u) \\
		&= \nabla_{\hat{g}}^2u(\nabla_{\hat{g}}u,\nabla_{\hat{g}}u) - \frac{1}{2c}\hat{g}^{ia}\p_{x_i}c\p_{x_a}u|\nabla_{\hat{g}}u|_{\hat{g}}^2,
	\end{align*}
	where $2\chrhat{m}{ij}=\hat{g}^{mr}(\p_{x_j}\hat{g}_{ir} + \p_{x_i}\hat{g}_{jr}-\p_{x_r}\hat{g}_{ij})$.

	Putting these together gives
	\begin{align*}
		0&= c^2|\nabla_gf(x)|_g^2\Delta_gf(x) - c^2\left(\nabla_g^2f(x)\right)\left(\nabla_g f(x),\nabla_g f(x)\right) \\
		&= \left(- \Delta_{\hat{g}}u + \frac{1-n}{2c}\hat{g}^{mr}\p_{x_r}c\p_{x_m}u + \frac{n-1}{2c}\p_{x_n}c\right)(1 + |\nabla_{\hat{g}}u|^2_{\hat{g}}) \\
		&+ \nabla_{\hat{g}}^2u(\nabla_{\hat{g}}u,\nabla_{\hat{g}}u).
	\end{align*}
	Thus $\text{Graph}_u$ is a minimal surface if and only if the function $u$ satisfies the following minimal surface equation
	\begin{align}\label{mse1}
		&\left(- \Delta_{\hat{g}}u + \frac{1-n}{2c(x',u(x'))}\hat{g}^{mr}(x')\p_{x_r}c(x',u(x'))\p_{x_m}u + \frac{n-1}{2c(x',u(x'))}\p_{x_n}c(x',u(x'))\right)(1 + |\nabla_{\hat{g}}u|^2_{\hat{g}}) \\\notag
		&+ \nabla_{\hat{g}}^2u(\nabla_{\hat{g}}u,\nabla_{\hat{g}}u) = 0
	\end{align}
	for all $x'\in\Om$.
	
	We will modify this a bit in order to see the more familiar Euclidean version of the equation directly. For this, we start with the divergence of  a vector field defined in local coordinates as
	\begin{equation*}
		\dive_ga=\sum_{i=1}^n\left(\p_{x_i}a^i + \sum_{j=1}^n a^j\chr{i}{ij}\right).
	\end{equation*}
	Let us denote $\eta:=\left(1+|\nabla_{g}u|_g^2\right)^{\half}$ and expand the following for a CTA metric and a function $u\colon \Om\to\R$ as before:
	\begin{align*}
		&\dive_g\left(\frac{\nabla_{g}u}{\eta}\right) \\
		&= \sum_{i=1}^n\left(\p_{x_i}\left(\frac{(\nabla_{g}u)^i}{\eta}\right) + \sum_{j=1}^n \frac{(\nabla_{g}u)^j}{\eta}\chr{i}{ij}\right) \\
		&= \sum_{i=1}^{n-1}\frac{\p_{x_i}g^{ik}\p_{x_k}u + g^{ik}\p_{x_ix_k}^2u}{\eta} \\
		&- \half \frac{(\nabla_{g}u)^i}{\eta^3} \left(\p_{x_i}g^{ab}\p_{x_a}u\p_{x_b}u + g^{ab}\left(\p_{x_ix_a}^2u\p_{x_b}u + \p_{x_a}u\p_{x_ix_b}^2u\right)\right) \\
		&+ \sum_{j=1}^{n-1}\frac{(\nabla_{g}u)^j}{\eta}\left(\chr{i}{ij} + \chr{n}{nj}\right).
	\end{align*}
	Now we will use that $\p_{x_l}g^{ik}=-(\chr{i}{ml}g^{mk} + \chr{k}{ml}g^{im})$ to have
	\begin{align*}
		&\dive_g\left(\frac{\nabla_{g}u}{\eta}\right) \\
		&= \sum_{i=1}^{n-1}\frac{-(\chr{i}{il}g^{lk} + \chr{k}{il}g^{il})\p_{x_k}u + g^{ik}\p_{x_ix_k}^2u}{\eta} \\
		&- \half \frac{(\nabla_{g}u)^i}{\eta^3} \left(-(\chr{a}{il}g^{lb} + \chr{b}{il}g^{al})\p_{x_a}u\p_{x_b}u + g^{ab}\left(\p_{x_ix_a}^2u\p_{x_b}u + \p_{x_a}u\p_{x_ix_b}^2u\right)\right) \\
		&+ \sum_{j=1}^{n-1}\frac{(\nabla_{g}u)^j}{\eta}\left(\chr{i}{ij} + \chr{n}{nj}\right) \\
		&= \frac{\Delta_{g}u}{\eta} - \sum_{i,l=1}^{n-1} \frac{(\nabla_{g}u)^l}{\eta}\chr{i}{il} \\
		&- \half \frac{(\nabla_{g}u)^i}{\eta^3} \left(-(\nabla_{g}u)^l\chr{a}{il}\p_{x_a}u - (\nabla_{g}u)^l\chr{b}{il}\p_{x_b}u + (\nabla_{g}u)^a\p_{x_ix_a}^2u + (\nabla_{g}u)^b\p_{x_ix_b}^2u\right) \\
		&+ \sum_{j=1}^{n-1}\frac{(\nabla_{g}u)^j}{\eta}\left(\chr{i}{ij} + \chr{n}{nj}\right).
	\end{align*}
	After renaming some of the indices, using the definition of the Riemannian Hessian of a function and $\chr{n}{nj}=\half c^{-1}\p_{x_j}c$ we get
	\begin{equation}\label{div_g}
		\dive_g\left(\frac{\nabla_{g}u}{\eta}\right) = \frac{\Delta_{g}u}{\eta} - \frac{\nabla_{g}^2u(\nabla_{g}u,\nabla_{g}u)}{\eta^3} + \sum_{j=1}^{n-1}\frac{(\nabla_{g}u)^j}{2c\eta}\p_{x_j}c.
	\end{equation}
	Let us now go back to \eqref{mse1} and modify it to have \eqref{div_g} visible. Now
	\begin{align*}
	 	&\left(- \Delta_{\hat{g}}u + \frac{1-n}{2c}\hat{g}^{mr}\p_{x_r}c\p_{x_m}u + \frac{n-1}{2c}\p_{x_n}c\right)(1 + |\nabla_{\hat{g}}u|^2_{\hat{g}}) \\
	 	&+ \nabla_{\hat{g}}^2u(\nabla_{\hat{g}}u,\nabla_{\hat{g}}u) \\
	 	&= \left(- \Delta_{\hat{g}}u + \frac{2-n}{2c}\hat{g}^{mr}\p_{x_r}c\p_{x_m}u + \frac{n-1}{2c}\p_{x_n}c\right)(1 + |\nabla_{\hat{g}}u|^2_{\hat{g}}) \\
	 	&- \half \hat{g}^{mr}\p_{x_r}c\p_{x_m}u(1 + |\nabla_{\hat{g}}u|^2_{\hat{g}}) + \nabla_{\hat{g}}^2u(\nabla_{\hat{g}}u,\nabla_{\hat{g}}u) \\
	 	&= \left(- c\Delta_{g}u + \frac{n-1}{2c}\p_{x_n}c\right)(1 + |\nabla_{\hat{g}}u|^2_{\hat{g}}) - \half \hat{g}^{mr}\p_{x_r}c\p_{x_m}u + \nabla_{g}^2u(\nabla_{\hat{g}}u,\nabla_{\hat{g}}u).
	\end{align*}
 	Then, we will multiply both sides of \eqref{mse1} by $c^{-2}$ to get
	\begin{align*}
	 	0 &= \left(- \Delta_{g}u + \frac{n-1}{2c^2}\p_{x_n}c\right)(c^{-1} + |\nabla_{g}u|^2_{g}) - \half c^{-3}\hat{g}^{mr}\p_{x_r}c\p_{x_m}u + \nabla_{g}^2u(\nabla_{g}u,\nabla_{g}u) \\
	 	&= - \Delta_{g}u|\nabla_{g}u|^2_{g} - c^{-1}\Delta_{g}u + \Delta_{g}u - \Delta_{g}u + \frac{(\nabla_{g}u)^j}{2c}\p_{x_j}c\eta^2 - \frac{(\nabla_{g}u)^j}{2c}\p_{x_j}c\eta^2 \\
	 	& + \frac{n-1}{2c^3}\p_{x_n}c(1 + |\nabla_{\hat{g}}u|^2_{\hat{g}}) - \half c^{-3}(\nabla_{g}u)^r\p_{x_r}c + \nabla_{g}^2u(\nabla_{g}u,\nabla_{g}u) \\
	 	&= - \Delta_g u\eta^2 + \nabla_{g}^2u(\nabla_{g}u,\nabla_{g}u) - \frac{(\nabla_{g}u)^j}{2c}\p_{x_j}c\eta^2 + \Delta_g u(1-c^{-1}) \\
	 	&+ \frac{n-1}{2c^3}\p_{x_n}c(1 + |\nabla_{\hat{g}}u|^2_{\hat{g}}) + \frac{1}{2c}(\nabla_{g}u)^j\p_{x_j}c(\eta^2 - c^{-2}).
	\end{align*}
 	Dividing both sides by $\eta^3=\left(1+|\nabla_{g}u|_g^2\right)^{\frac{3}{2}}$ gives
 	\begin{align*}
 		&\dive_g\left(\frac{\nabla_{g}u}{\left(1+|\nabla_{g}u|_g^2\right)^{\half}}\right) \\
 		&+ \frac{\Delta_{g}u(1-c^{-1}) + \frac{1}{2c}(\nabla_{g}u)^j\p_{x_j}c(1 + |\nabla_{g}u|_g^2 - c^{-2}) + \frac{n-1}{2c^3}\p_{x_n}c(1 + |\nabla_{\hat{g}}u|^2_{\hat{g}})}{\left(1+|\nabla_{g}u|_g^2\right)^{\frac{3}{2}}} = 0.
 	\end{align*}
	When $g$ is the Euclidean metric, we will have the familiar Euclidean minimal surface equation. Moreover, when $c=1$ we get the minimal surface equation in \cite{Carstea2022}:
	\begin{equation*}
		\dive_{\hat{g}}\left(\frac{\nabla_{\hat{g}}u}{\left(1+|\nabla_{\hat{g}}u|_{\hat{g}}^2\right)^{\half}}\right)=0.
	\end{equation*}

	\section{First and second order linearizations}\label{section linearizations}

	Let $g$ be as in \eqref{metricg} and define $F\colon \R^{n^2} \to \R,$
	\begin{align}\label{def_F}
		F(x',u,\nabla_g u,\nabla_{g}^2u):=& \left(- \Delta_{\hat{g}}u + \frac{1-n}{2c}\hat{g}^{mr}\p_{x_r}c\p_{x_m}u + \frac{n-1}{2c}\p_{x_n}c\right)(1 + |\nabla_{\hat{g}}u|^2_{\hat{g}}) \\
		&+ \nabla_{\hat{g}}^2u(\nabla_{\hat{g}}u,\nabla_{\hat{g}}u). 
	\end{align} 
	We want to use the well-posedness result from \cite{Nurminen2022} to say that the boundary value problem
	\begin{equation}\label{BVP_main}
		\left\{\begin{array}{ll}
			F(x',u,\nabla_g u,\nabla_g^2u)=0 & \text{in}\,\, \Om \\
			u=f, & \text{on}\,\, \partial\Om
		\end{array} \right.
	\end{equation}
	has a unique small solution. For this we need that $F(x',0,0,0)=0$ which guarantees that $u\equiv0$ is a solution to \eqref{BVP_main} with $f=0$. From \eqref{def_F} we can see that this happens if and only if $\p_{x_n}c(x',0)=0$ for all $x'\in\Om$.
	
	To use the above mentioned well-posedness, we will need to calculate the first linearization of  the equation $F(x',u,\nabla_g u,\nabla_g^2u)=0$. Let us do a formal calculation to get the first linearization. Let $\e=(\e_1,\ldots,\e_m)$ where $\e_k\in\R$ small and assume that $u_{\e}:=u(x,\e)$ depends smoothly on $\e$ and solves $F(x',u_{\e},\nabla_{g}u_{\e},\nabla_{g}^2u_{\e})=0$. We will differentiate this in three parts with respect to $\e_l$ and evaluate at $\e=0$ (We could have calculated the first linearization as in \cite{Nurminen2022}, but it would be a longer calculation). Denote by $u_{\e}^{(l)}:=\p_{\e_l}u_{\e}$. Just to make the calculations a bit nicer we will start with
	\begin{align}\label{first_lin_2}
		&\p_{\e_l}\left(\frac{1-n}{2c} \hat{g}^{mr}\p_{x_r}c\p_{x_m}u_{\e}\right) \\\notag
		&= - \frac{1-n}{2c^2} \p_{x_n}cu_{\e}^{(l)}\hat{g}^{mr}\p_{x_r}c\p_{x_m}u_{\e} + \frac{1-n}{2c} \hat{g}^{mr}(\p_{x_rx_n}^2cu_{\e}^{(l)}\p_{x_m}u_{\e} + \p_{x_r}c\p_{x_m}u_{\e}^{(l)})\\\notag
		&:= Q_l.
	\end{align}
	Now the first term of \eqref{def_F} becomes
	\begin{align}\label{first_lin_1}
		&\p_{\e_l}\left(\left(- \Delta_{\hat{g}}u_{\e} + \frac{1-n}{2c}\hat{g}^{mr}\p_{x_r}c\p_{x_m}u_{\e} + \frac{n-1}{2c}\p_{x_n}c\right)(1 + |\nabla_{\hat{g}}u_{\e}|^2_{\hat{g}})\right) \\\notag
		&= \left(- \Delta_{\hat{g}}u_{\e}^{(l)} - \frac{n-1}{2c^{-2}}(\p_{x_n}c)^2u_{\e}^{(l)} + \frac{n-1}{2c}\p_{x_n}^2cu_{\e}^{(l)} + Q_l\right)(1 + |\nabla_{\hat{g}}u_{\e}|^2_{\hat{g}}) \\\notag
		&+ \left(- \Delta_{\hat{g}}u_{\e} + \frac{1-n}{2c}\hat{g}^{mr}\p_{x_r}c\p_{x_m}u_{\e} + \frac{n-1}{2c}\p_{x_n}c\right)\hat{g}^{ij}(\p_{x_i}u_{\e}^{(l)}\p_{x_j}u_{\e} + \p_{x_i}u_{\e}\p_{x_j}u_{\e}^{(l)}).
	\end{align}
	The second term is
	\begin{align}\label{first_lin_3}
		&\p_{\e_l}\left(\nabla_{\hat{g}}^2u_{\e}(\nabla_{\hat{g}}u_{\e},\nabla_{\hat{g}}u_{\e})\right) \\\notag
		&= \p_{\e_l}\left((\p_{x_ix_j}^2u_{\e} - \chrhat{m}{ij}\p_{x_m}u_{\e})\hat{g}^{ia}\p_{x_a}u_{\e}\hat{g}^{jb}\p_{x_b}u_{\e}\right) \\\notag
		&= (\p_{x_ix_j}^2u_{\e}^{(l)} - \chrhat{m}{ij}\p_{x_m}u_{\e}^{(l)})\hat{g}^{ia}\p_{x_a}u_{\e}\hat{g}^{jb}\p_{x_b}u_{\e} \\\notag
		&+ (\p_{x_ix_j}^2u_{\e} - \chrhat{m}{ij}\p_{x_m}u_{\e})\hat{g}^{ia}\hat{g}^{jb}(\p_{x_a}u_{\e}^{(l)}\p_{x_b}u_{\e} + \p_{x_a}u_{\e}\p_{x_b}u_{\e}^{(l)})\\\notag
		&:= P_1 + P_2.
	\end{align}
	where $P_1$ and $P_2$ are for future reference.
	When evaluating at $\e=0$ we have $u_0=0$, also $\nabla_{g}u_{0},\nabla_{g}^2u_{0}$ vanish,  and thus when combining the above, denoting $v^l:=u_{\e}^{(l)}|_{\e=0}$, 
	\begin{align*}
		&\p_{\e_l}F(x',u_{\e},\nabla_{g}u_{\e},\nabla_{g}^2u_{\e})|_{\e=0} \\
		&= - \Delta_{\hat{g}}v^l + \frac{n-1}{2c(x',0)}\p_{x_n}^2c(x',0)v^l + \frac{1-n}{2c(x',0)} \sum_{i,j=1}^{n-1}\hat{g}^{ij}\p_{x_i}c(x',0)\p_{x_j}v^l = 0.
	\end{align*}
	Now this first linearization satisfies the assumptions of \cite[Proposition 2.1]{Nurminen2022} and thus we have the existence and uniqueness of small solutions to \eqref{BVP_main}.
	
	Next we will calculate the second linearization for the equation $F(x',u,\nabla_g u,\nabla_g^2u)=0$. Denote $u_{\e}^{(kl)}:=\p_{\e_k\e_l}^2u_{\e}$. We will differentiate \eqref{first_lin_2}-\eqref{first_lin_3} with respect to $\e_k$, $k\neq l$, starting with \eqref{first_lin_2}:
	\begin{align*}
		&\p_{\e_k}\p_{\e_l}\left(\frac{1-n}{2c} \hat{g}^{mr}\p_{x_r}c\p_{x_m}u_{\e}\right) \\
		&=  \frac{1-n}{c^3}(\p_{x_n}c)^2u_{\e}^{(k)}u_{\e}^{(l)} \hat{g}\p_{x_r}c\p_{x_m}u_{\e} - \frac{1-n}{2c^2}\p_{x_n}^2cu_{\e}^{(k)}u_{\e}^{(l)} \hat{g}\p_{x_r}c\p_{x_m}u_{\e} \\
		&- \frac{1-n}{2c^2}\p_{x_n}cu_{\e}^{(kl)} \hat{g}\p_{x_r}c\p_{x_m}u_{\e} \\
		&- \frac{1-n}{2c^2} \p_{x_n}cu_{\e}^{(l)}\hat{g}^{mr}(\p_{x_rx_n}^2cu_{\e}^{(k)}\p_{x_m}u_{\e} + \p_{x_r}c\p_{x_m}u_{\e}^{(k)}) \\
		&- \frac{1-n}{2c^2} \p_{x_n}c\hat{g}^{mr}(\p_{x_rx_n}^2cu_{\e}^{(l)}\p_{x_m}u_{\e} + \p_{x_r}c\p_{x_m}u_{\e}^{(l)}) \\
		&+ \frac{1-n}{2c}\hat{g}^{mr}\big(\p_{x_r}\p_{x_n}^2cu_{\e}^{(k)}u_{\e}^{(l)}\p_{x_m}u_{\e} + \p_{x_rx_n}^2c(u_{\e}^{(kl)}\p_{x_m}u_{\e} + u_{\e}^{(l)}\p_{x_m}u_{\e}^{k}) \\
		&+ \p_{x_rx_n}cu_{\e}^{(k)}\p_{x_m}u_{\e}^{(l)} + \p_{x_r}c\p_{x_m}u_{\e}^{(kl)}\big).
	\end{align*}
	Then we differentiate \eqref{first_lin_1} to have
	\begin{align*}
		&\p_{\e_k}\p_{\e_l}\left(\left(- \Delta_{\hat{g}}u_{\e} + \frac{1-n}{2c}\hat{g}^{mr}\p_{x_r}c\p_{x_m}u_{\e} + \frac{n-1}{2c}\p_{x_n}c\right)(1 + |\nabla_{\hat{g}}u_{\e}|^2_{\hat{g}})\right) \\
		&= \bigg(- \Delta_{\hat{g}}u_{\e}^{(kl)} + \frac{n-1}{c^{-3}}(\p_{x_n}c)^3u_{\e}^{(k)}u_{\e}^{(l)} - \frac{n-1}{2c^{-2}}\left(2\p_{x_n}c\p_{x_n}^2cu_{\e}^{(k)}u_{\e}^{(l)} + (\p_{x_n}c)^2u_{\e}^{(kl)}\right) \\
		&- \frac{n-1}{2c^{-2}}\p_{x_n}c\p_{x_n}^2cu_{\e}^{(k)}u_{\e}^{(l)} + \frac{n-1}{2c}\left(\p_{x_n}^3cu_{\e}^{(k)}u_{\e}^{(l)} + \p_{x_n}^2cu_{\e}^{(kl)}\right) + \p_{\e_k}Q_l\bigg)(1+|\nabla_{\hat{g}}u_{\e}|_{\hat{g}}^2) \\
		&+ \left(- \Delta_{\hat{g}}u_{\e}^{(l)} - \frac{n-1}{2c^{-2}}(\p_{x_n}c)^2u_{\e}^{(l)} + \frac{n-1}{2c}\p_{x_n}^2cu_{\e}^{(l)} + Q_l\right)\hat{g}^{ij}(\p_{x_i}u_{\e}^{(k)}\p_{x_j}u_{\e} + \p_{x_i}u_{\e}\p_{x_j}u_{\e}^{(k)}) \\
		&+ \left(- \Delta_{\hat{g}}u_{\e}^{(k)} - \frac{n-1}{2c^{-2}}(\p_{x_n}c)^2u_{\e}^{(k)} + \frac{n-1}{2c}\p_{x_n}^2cu_{\e}^{(k)}  + Q_k\right)\hat{g}^{ij}(\p_{x_i}u_{\e}^{(l)}\p_{x_j}u_{\e} + \p_{x_i}u_{\e}\p_{x_j}u_{\e}^{(l)}) \\
		&+ \left(- \Delta_{\hat{g}}u_{\e} + \frac{1-n}{2c}\hat{g}^{mr}\p_{x_r}c\p_{x_m}u_{\e} + \frac{n-1}{2c}\p_{x_n}c\right)\hat{g}^{ij}(\p_{x_i}u_{\e}^{(kl)}\p_{x_j}u_{\e} + \p_{x_i}u_{\e}^{(l)}\p_{x_j}u_{\e}^{(k)} \\
		&+ \p_{x_i}u_{\e}^{(k)}\p_{x_j}u_{\e}^{(l)} + \p_{x_i}u_{\e}\p_{x_j}u_{\e}^{(kl)}).
	\end{align*}

	Now we calculate the derivative of \eqref{first_lin_3} in two parts, starting with $P_1$:
	\begin{align*}
		&\p_{\e_k}P_1 \\
		&= (\p_{x_ix_j}^2u_{\e}^{(kl)} - \chrhat{m}{ij}\p_{x_m}u_{\e}^{(kl)})\hat{g}^{ia}\p_{x_a}u_{\e}\hat{g}^{ja}\p_{x_b}u_{\e} \\
		&+ (\p_{x_ix_j}^2u_{\e}^{(l)} - \chrhat{m}{ij}\p_{x_m}u_{\e}^{(l)})\hat{g}^{ia}\hat{g}^{jb}(\p_{x_a}u_{\e}^{(k)}\p_{x_b}u_{\e} + \p_{x_a}u_{\e}\p_{x_b}u_{\e}^{(k)}).
	\end{align*}
	For $P_2$ we get
	\begin{align*}
		&\p_{\e_k}P_2\\
		&= (\p_{x_ix_j}^2u_{\e}^{(k)} - \chrhat{m}{ij}\p_{x_m}u_{\e}^{(k)})\hat{g}^{ia}\hat{g}^{jb}(\p_{x_a}u_{\e}^{(l)}\p_{x_b}u_{\e} + \p_{x_a}u_{\e}\p_{x_b}u_{\e}^{(l)}) \\
		&+ (\p_{x_ix_j}u_{\e} - \chrhat{m}{ij}\p_{x_m}u_{\e})\hat{g}^{ia}\hat{g}^{jb}(\p_{x_a}u_{\e}^{(kl)}\p_{x_b}u_{\e} + \p_{x_a}u_{\e}^{(l)}\p_{x_b}u_{\e}^{(k)} \\
		&+ \p_{x_a}u_{\e}^{(k)}\p_{x_b}u_{\e}^{(l)} + \p_{x_a}u_{\e}\p_{x_b}u_{\e}^{(kl)}).
	\end{align*}
	Evaluating the above second derivatives at $\e=0$, denoting $w^{kl}:=u_{\e}^{(kl)}|_{\e=0}$, gives the second linearization of the equation $F(x',u,\nabla_g u,\nabla_g^2u)=0$:
	\begin{align*}
		&\p_{\e_k\e_l}^2F(x',u_{\e},\nabla_{g}u_{\e},\nabla_{g}^2u_{\e})|_{\e=0} \\
		&= - \Delta_{\hat{g}}w^{kl} + \frac{n-1}{2c(x',0)}\left(\p_{x_n}^3c(x',0)v^kv^l + \p_{x_n}^2c(x',0)w^{kl}\right) \\
		&+ \frac{1-n}{2c(x',0)}\sum_{i,j=1}^{n-1}\hat{g}^{ij}\p_{x_i}c(x',0)\p_{x_j}w^{kl}=0.
	\end{align*}
	Here we used that $u_0=0$, $\nabla_{g}u_{0},\nabla_{g}^2u_{0}$ vanish, $\p_{x_n}c(x',0)=0$ and that $\p_{x_kx_n}c(x',0)=0$ for $k=1,\ldots,n-1$.

	\section{Proofs of Theorems \ref{main thm} and \ref{main thm_partial}}\label{section proof of main}

	Before going to the proof of Theorem \ref{main thm}, we state two lemmas that will be used also in the proof of Theorem \ref{main thm_partial}. The first lemma says that the products of two solutions to an advection diffusion equation form a complete set in $L^1$. For this we note that equations of the form $\Delta_{g}u + Xu = 0$, for a smooth real valued vector field $X$, can be written in the form of a magnetic Schrödinger equation. In local coordinates we have
	\begin{equation*}
		L_{g,A,q}u = -|g|^{-\half}\left(\p_{x_j} + iA_j\right)\left(|g|^{\half}g^{jk}\left(\p_{x_k} + iA_k\right)u\right) + qu = 0
	\end{equation*}
	for $A=\frac{iX}{2}$ and $q=\frac{1}{4} g(X,X) - \half\dive_g(X)$.
	\begin{lemma}\label{lemma_density}
		Let $(\Om,g)$, $\Om\subset\Rnn$, be a smooth Riemannian manifold with boundary. We have two cases:
		\begin{enumerate}
			\item When $n=2$ let $\Gamma\subset\p\Om$ be a nonempty open set, let $f\in C^{\infty}(\bar{\Om})$ be such that $f|_{\Gamma}=0$. Assume that
			\begin{equation}\label{int_id_lemma}
				\int_{\Om}fu_1u_2\,dV_{g} = 0
			\end{equation}
			for all $u_j$ solving $L_{g,A,q}u_j=0$ in $\Om$ with $u_j|_{\p\Om\setminus\Gamma}=0$. Then $f\equiv0$ in $\Om$.
			\item When $n>2$ let $g$ be admissible, $f\in C^{\infty}(\bar{\Om})$ be such that $f|_{\p\Om}=0$. Assume that \eqref{int_id_lemma} holds for all $u_j$ solving $L_{g,A,q}u_j=0$ in $\Om$. Then $f\equiv0$ in $\Om$.
		\end{enumerate}
	\end{lemma}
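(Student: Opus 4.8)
I would prove this by the complex geometric optics (CGO) method for the magnetic Schr\"odinger operator $L_{g,A,q}$; the two cases rely on the two standard but quite different CGO constructions, so I would treat them separately. In both cases it suffices to show that if $f\in C^\infty(\bar\Om)$ satisfies the stated boundary condition and \eqref{int_id_lemma} holds for all admissible $u_1,u_2$, then $f\equiv0$ in the interior of $\Om$. The first step is routine: after slightly shrinking $\Om$ one may assume $g$ extends smoothly past $\bar\Om$ (in case (2), still in admissible form $c(\hat g\oplus e)$ with a simple transversal manifold $(\Om',\hat g)$, $\Om'\subset\R^{n-1}$), and one extends $f$ by zero --- across $\Gamma$ in case (1), using that the relevant solutions vanish on $\p\Om\setminus\Gamma$, and across $\p\Om$ in case (2).

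For case (2) the plan is as follows. The linear function $\varphi(x',x_n)=x_n$ is a limiting Carleman weight for $\Delta_g$, and the first and zeroth order terms of $L_{g,A,q}$ are harmless perturbations for the corresponding Carleman estimate. Exactly as in the treatment of the magnetic Schr\"odinger equation on admissible manifolds (see \cite{Krupchyk2018} and the references therein), for every small $h>0$ and every $\lambda\in\R$ one constructs CGO solutions
\[
u_j=e^{\pm\frac1h(\varphi+i\psi)}\Bigl(c^{-\frac{n-2}{4}}\,a_j+r_j\Bigr),\qquad \|r_j\|_{L^2(\Om)}=O(h),
\]
with the two signs opposite so that the exponential weights cancel in the product $u_1u_2$; here $\psi$ solves the eikonal equation on $(\Om',\hat g)$, and $a_j$ carries an oscillatory factor $e^{i\lambda x_n}$ together with the transport of the magnetic potential $A$ along geodesics of $(\Om',\hat g)$. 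Plugging such a pair into \eqref{int_id_lemma}, using $dV_g=c^{n/2}\,dV_{\hat g\oplus e}$, and letting $h\to0$, the powers of $c$ combine into a single factor and the identity reduces to the vanishing, over all maximal geodesics of $(\Om',\hat g)$, of a geodesic ray transform of $cf$ attenuated by (the transversal part of) $A$. Injectivity of this transform on the simple manifold $(\Om',\hat g)$ --- which is the reason admissibility of $\hat g$ is assumed --- then gives $cf=0$, hence $f\equiv0$ since $c>0$.

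For case (1) ($n=2$) the plan is to use Bukhgeim-type CGO solutions. After passing to isothermal coordinates one builds solutions $u_1=e^{\Phi/h}(a_1+r_1)$ and $u_2=e^{-\overline{\Phi}/h}(a_2+r_2)$ of $L_{g,A,q}u=0$, where $\Phi$ is a holomorphic Morse function with a critical point at a prescribed interior point $p$, the amplitudes $a_j$ (nonvanishing at $p$) solve $\bar\partial$-equations involving $A$, and $r_j\to0$ in $L^2$. For the partial data statement one must in addition arrange that $u_j$ vanishes on $\p\Om\setminus\Gamma$; this is done with the Carleman estimates with boundary terms used for the two-dimensional magnetic Schr\"odinger equation with partial data, as in \cite{Tzou2017}, which also fixes the admissible choices of the sign of the weight and of $\Phi$. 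In the product $u_1u_2$ the phase is $e^{2i\,\mathrm{Im}\,\Phi/h}$, and stationary phase shows that the leading term of \eqref{int_id_lemma} as $h\to0$ is a nonzero multiple of $f(p)\,a_1(p)\,a_2(p)$; hence $f(p)=0$. Since $p$ was an arbitrary interior point, $f$ vanishes in the interior of $\Om$, and by continuity $f\equiv0$.

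The hard part in both cases is the CGO construction itself rather than the ensuing limiting argument. In case (2) it is the construction of the amplitudes carrying the magnetic transport and, afterwards, the correct identification of the attenuated ray transform together with the use of its injectivity on $(\Om',\hat g)$. In case (1) the main obstacle is the partial data construction: one must simultaneously force $u_1,u_2$ to vanish on $\p\Om\setminus\Gamma$ and keep their amplitudes nonvanishing at an interior critical point of the phase, which is precisely where the Carleman estimates with controlled boundary terms replace the classical interior CGO solutions.
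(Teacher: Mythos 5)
The paper's own ``proof'' is a two-line citation: case $n=2$ is referred to \cite[Section 7.3]{Tzou2017} and case $n>2$ to the part of the proof of \cite[Theorem 1.7]{Ferreira2009b} where the two potentials are identified. Your proposal unfolds exactly those two arguments: Bukhgeim-type CGO solutions with partial-data Carleman estimates controlling boundary terms for $n=2$, and CGO solutions built from the limiting Carleman weight $\varphi=x_n$ on admissible (conformally transversally anisotropic) manifolds, reducing the identity to an attenuated geodesic ray transform on the simple transversal manifold, for $n>2$. So you are taking essentially the same route as the paper, just filling in the sketch that the paper delegates to the references. Two small things you could make explicit to match the cited arguments more precisely: in case (2) the oscillatory factor $e^{i\lambda x_n}$ turns the $x_n$-integration into a Fourier transform in $x_n$, so one actually obtains the geodesic ray transform of $\widehat{cf}(\cdot,\lambda)$ for every $\lambda$, and it is injectivity of the attenuated transform for each fixed $\lambda$ together with Fourier inversion that gives $f\equiv0$; and in case (1) the role of the hypothesis $f|_\Gamma=0$ (combined with $u_j|_{\partial\Omega\setminus\Gamma}=0$) is precisely to kill boundary contributions in the stationary-phase expansion, so it is worth stating that the product $fu_1u_2$ vanishes on all of $\partial\Omega$.
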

	\begin{proof}
		$n=2$: The proof is written in \cite[Section 7.3]{Tzou2017} where the author considers identifying a zeroth order term.
		
		$n>2$: This can be read from the proof of \cite[Theorem 1.7]{Ferreira2009b}, more precisely from the part where they prove that the two potentials $q_1$ and $q_2$ agree in $M$ (using the notation of the referred article).
	\end{proof}

	The second lemma states that if we know the partial DN maps corresponding to $\Delta_{g}u+X_ju=0$ for $j=1,2$ and $X_1=X_2$ on $\Gamma\subset\p\Om$, $\Gamma\neq\emptyset$, then $X_1=X_2$ in $\Om$. The DN map in question is defined as $\Lambda_{X}^{\Gamma}\colon C^{k,\alpha}(\Gamma)\to C^{k-1,\alpha}(\Gamma),$
	\begin{equation*}
		\Lambda_{X}^{\Gamma}f=\p_{\nu}u|_{\Gamma}.
	\end{equation*}
	
	\begin{lemma}\label{lemma_first_lin_identification}
		Let $(\R^2,g)$ be a Riemannian manifold, $\Om\subset\R^2$ be simply connected with smooth boundary and $\Gamma\subset\p\Om$ be an open nonempty set. Let $X_j$ be smooth vector fields for $j=1,2$ so that $X_1|_{\Gamma}=X_2|_{\Gamma}$. If $\Lambda_{X_1}^{\Gamma}f=\Lambda_{X_2}^{\Gamma}f$ for all $f\in C^{k,\alpha}(\Gamma)$, then $X_1=X_2$ in $\Om$.
	\end{lemma}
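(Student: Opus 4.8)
The plan is to reduce the statement to a known uniqueness result for the advection–diffusion (equivalently, magnetic Schrödinger) equation on a simply connected surface. As recalled just before Lemma \ref{lemma_density}, for a smooth real vector field $X$ the equation $\Delta_{g}u + Xu = 0$ is the same as $L_{g,A,q}u = 0$ with $A = \frac{i}{2}X^{\flat}$ and $q = \frac14 g(X,X) - \half\dive_{g}X$, where $X^{\flat}$ is the $g$-dual $1$-form of $X$. Applying this to $X_{1}$ and $X_{2}$ produces two operators $L_{g,A_{1},q_{1}}$ and $L_{g,A_{2},q_{2}}$ with purely imaginary magnetic potentials arising from real vector fields.

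First I would translate the hypothesis $\Lambda_{X_{1}}^{\Gamma} = \Lambda_{X_{2}}^{\Gamma}$ into equality of the (gauge-covariant) partial Cauchy data of these two operators on $\Gamma$. For a fixed boundary value $f$ the corresponding solutions $u_{1}, u_{2}$ agree on $\p\Om$ (both equal $f$, supported in $\Gamma$), and $\p_{\nu}u_{1} = \p_{\nu}u_{2}$ on $\Gamma$ by assumption; since $X_{1} = X_{2}$ on $\Gamma$, the covariant normal traces, which differ from $\p_{\nu}u_{j}|_{\Gamma}$ by a term proportional to $(X_{j}\cdot\nu)f$, agree on $\Gamma$ as well. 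Now I would apply the two-dimensional partial-data theorem for the magnetic Schrödinger equation on a simply connected domain — the same ingredient used to identify the advection term in the first linearization, \cite{Tzou2017} (see also \cite{Guillarmou2011}) — to conclude that $dA_{1} = dA_{2}$ and $q_{1} = q_{2}$ in $\Om$.

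Since $\Om$ is simply connected, $dA_{1} = dA_{2}$ gives $A_{2} - A_{1} = d\psi$ for some $\psi \in C^{\infty}(\bar\Om)$; as $A_{1} = A_{2}$ on $\Gamma$ we have $d\psi = 0$ on $\Gamma$, and after subtracting a constant we may assume $\psi = 0$ on some component $\Gamma_{0}$ of $\Gamma$. Because $A_{1}, A_{2}$ are imaginary and $X_{1}, X_{2}$ real, $\psi = i\rho$ with $\rho$ real and smooth, and $X_{2} = X_{1} + 2\nabla_{g}\rho$. Substituting this into $q_{1} = q_{2}$ and simplifying gives
\begin{equation*}
	\Delta_{g}\rho - X_{1}\rho - |\nabla_{g}\rho|_{g}^{2} = 0 \quad\text{in } \Om .
\end{equation*}
Treating $\rho$ as given, this is the linear elliptic equation $\Delta_{g}\rho - Y\rho = 0$ with $Y := X_{1} + \nabla_{g}\rho$ a fixed smooth vector field and no zeroth order term, and by the previous step the Cauchy data $(\rho, \p_{\nu}\rho)$ vanish on the nonempty open set $\Gamma_{0} \subset \p\Om$. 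Unique continuation from the boundary — extend $\rho$ by $0$ across $\Gamma_{0}$, then use the strong unique continuation property and connectedness of $\Om$ — forces $\rho \equiv 0$, hence $X_{1} = X_{2}$ in $\Om$.

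The step I expect to be the crux is the middle one: identifying the precise planar partial-data statement to invoke, checking that it applies to the DN map $\p_{\nu}u|_{\Gamma}$ used here (rather than a gauge-covariant normal derivative) and that it delivers both $dA_{1} = dA_{2}$ and $q_{1} = q_{2}$ from partial data. Matching the boundary normalisations of that linear inverse problem with ours is where the hypothesis $X_{1}|_{\Gamma} = X_{2}|_{\Gamma}$ enters; once that is in place, the computations with the explicit $A_{j}$ and $q_{j}$ and the unique-continuation conclusion are routine.
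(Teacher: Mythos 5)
Your proposal is correct and follows essentially the same route as the paper: invoke Tzou's partial-data result for the magnetic Schrödinger equation on a surface, use the Poincaré lemma on the simply connected domain to write the difference of the vector fields as a gradient of a scalar, substitute into the equality of electric potentials to get a (quasi)linear elliptic equation with vanishing Cauchy data on $\Gamma$, and conclude by unique continuation. The only differences are cosmetic: the paper records Tzou's conclusion as $iA_1 - iA_2 = \theta^{-1}d\theta$ and then takes $d$, whereas you quote the gauge-invariant form $dA_1 = dA_2$ directly, and you absorb the quadratic term into a modified drift $Y = X_1 + \nabla_g\rho$ rather than bounding it as the paper does.
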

	\begin{proof}
		By \cite[Theorem 1.1]{Tzou2017} we have $iA_1-iA_2=\theta^{-1}d\theta$ and $q_1=q_2$ in $\Om$, where
		\begin{equation*}
			A_j=\frac{iX_j}{2},\quad q_j=\frac{1}{4}g(X_j,X_j) - \half\dive_g(X_j)
		\end{equation*}
		and $\theta$ is a nonvanishing function with $\theta|_{\Gamma}=1$.
		Notice that $d(\theta^{-1}d\theta)=d(\theta^{-1})\wedge d\theta + \theta^{-1}\wedge d^2\theta = 0$ which implies that $dX_1=dX_2$ and by the Poincaré Lemma we have a function $\varphi\in C^{\infty}(\bar{\Om})$ such that
		\begin{equation*}
			X_1-X_2=\nabla_g\varphi.
		\end{equation*}
		Using that $X_1|_{\Gamma}=X_2|_{\Gamma}$ gives $0=(X_1-X_2)\cdot\nu|_{\Gamma}=\p_{\nu}\varphi|_{\Gamma}$ and $\varphi|_{\Gamma}=0$ (actually we have that $\varphi|_{\p\Om}$ is a constant, but we can subtract this constant to obtain $\varphi|_{\p\Om}=0$). Let us see what kind of an equation $\varphi$ satisfies. Now $X_2=X_1-\nabla_g\varphi$ and plugging this into $q_1=q_2$ gives
		\begin{align*}
			&\frac{1}{4}g(X_1,X_1) - \half\dive_g(X_1) \\
			&= \frac{1}{4}g^{kl}\left(X_{1,k}-(\nabla_g\varphi)_k\right)\left(X_{1,l}-(\nabla_g\varphi)_l\right) - \half\dive_g(X_1) + \half\dive_g(\nabla_g\varphi) \\
			&= \frac{1}{4}g(X_1,X_1) - \half g(X_1,\nabla_g\varphi) + \frac{1}{4}g(\nabla_g\varphi, \nabla_g\varphi) - \half\dive_g(X_1) + \half \Delta_g\varphi.
		\end{align*}
		Thus $\varphi$ solves
		\begin{equation}\label{BVP_first_lin}
			\left\{\begin{array}{ll}
				\Delta_g\varphi - g(X_1,\nabla_g\varphi) + \half |\nabla_{g}\varphi|_g^2 = 0 & \text{in}\,\, \Om \\
				\varphi = 0 & \text{on}\,\, \Gamma\\
				\p_{\nu}\varphi = 0 & \text{on}\,\, \Gamma.
			\end{array} \right.
		\end{equation}
		From this we get that
		\begin{align*}
			|\Delta_{g}\varphi|=|g(X_1,\nabla_g\varphi) - \half |\nabla_{g}\varphi|_g^2|\leq |g(X_1,\nabla_g\varphi)| + |\half |\nabla_{g}\varphi|_g^2| \leq C|\nabla_{g}\varphi|_g\leq C(|\nabla_{g}\varphi|_g + |\varphi|)
		\end{align*}
		since $|\nabla_{g}\varphi|_g\leq M$ for some $M>0$.
		Then by the unique continuation principle for local Cauchy data $\varphi\equiv0$ in $\Om$ (see \cite[Theorem B.$1$]{Kenig2011}) and hence $X_1=X_2$.
	\end{proof}
	
	\begin{proof}[Proof Theorem \ref{main thm}]
		The assumptions of \cite[Proposition 2.1]{Nurminen2022} are satisfied and thus \eqref{BVP_main} has a unique small solution $u_f\in C^{s}(\bar{\Om})$. Let $\e=(\e_1,\ldots,\e_N)$ where $\e_k\in \R$ small enough so that \eqref{BVP_main} has a unique small solution for $f_{\e}:=\sum_{k=1}^{N}f_k$, where $f_k\in C^s(\p\Om)$ with $\norm{f_k}_{C^s(\p\Om)}< \delta$, $\delta>0$. Denote by $u:=u(x,\e)$ and $\tilde{u}:=\tilde{u}(x,\e)$ the unique small solutions to \eqref{BVP_main} with the mterics $g$ and $\tilde{c}g$ respectively. Then by \cite[Proposition 2.1]{Nurminen2022} the solutions $u$ and $\tilde{u}$ depend smoothly on $\e$. Hence we can calculate the first and second order linearizations of \eqref{BVP_main} and the linearizations of the DN map \eqref{DN_map_intro}.
		
		In section \ref{section linearizations} we already calculated the first linearization of $F(x',u,\nabla_g u,\nabla_g^2u)=0$ and since we also assume that $\p_{x_n}^2c(x',0)=0$ we have that the first linearization of \eqref{BVP_main} is
		\begin{equation}\label{BVP_first_lin}
			\left\{\begin{array}{ll}
				 - \Delta_{\hat{g}}v^l + \frac{1-n}{2c(x',0)} \sum_{i,j=1}^{n-1}\hat{g}^{ij}\p_{x_i}c(x',0)\p_{x_j}v^l = 0 & \text{in}\,\, \Om \\
				v^l=f_l, & \text{on}\,\, \partial\Om,
			\end{array} \right.
		\end{equation}
		where $v^l=\p_{\e_l}u|_{\e=0}$.
		Also, the first linearization of the DN map is $(D\Lambda_g)_0\colon C^s(\p\Om)\to C^{s-1}(\p\Om), f\mapsto \p_{\nu}v^l|_{\p\Om}$. Now \eqref{BVP_first_lin} can be written in the form of an advection diffusion equation $ - (\Delta_{\hat{g}} - X)v^l=0$ for the real vector field $X$ such that 
		\begin{equation*}
			Xh = \frac{1-n}{2c(x',0)}\hat{g}^{ij}\p_{x_i}c(x',0)\p_{x_j}h.
		\end{equation*}	
		For the metric $\tilde{c}g$ the first linearization of $F(x',\tilde{u},\nabla_{\tilde{c}g} \tilde{u},\nabla_{\tilde{c}g}^2\tilde{u})=0$ is
		\begin{equation}\label{BVP_first_lin_tilde}
			\left\{\begin{array}{ll}
				- \Delta_{\hat{g}}\tilde{v}^l + \frac{1-n}{2c(x',0)} \sum_{i,j=1}^{n-1}\hat{g}^{ij}\p_{x_i}c(x',0)\p_{x_j}\tilde{v}^l \phantom{\bigg|}\\
				+ \frac{1-n}{2\tilde{c}(x',0)} \sum_{i,j=1}^{n-1}\hat{g}^{ij}\p_{x_i}\tilde{c}(x',0)\p_{x_j}\tilde{v}^l = 0 & \text{in}\,\, \Om \\
				\tilde{v}^l=f_l, & \text{on}\,\, \partial\Om, \phantom{\Big|}
			\end{array} \right.
		\end{equation}
		where $\tilde{v}^l=\p_{\e_l}\tilde{u}|_{\e=0}$.
		Similarly this can be written as an advection diffusion equation for the real vector field $\tilde{X}$ such that $\tilde{X}h = \frac{1-n}{2}\hat{g}^{ij}\left(\frac{\p_{x_i}c(x',0)}{c(x',0)} + \frac{\p_{x_i}\tilde{c}(x',0)}{\tilde{c}(x',0)}\right)\p_{x_j}h$. Since we know that $\Lambda_g(f)=\Lambda_{\tilde{c}g}(f)$ for all $f\in U_{\delta}$, where $\delta>0$ is sufficiently small, we can apply $\p_{\e_l}|_{\e=0}$ to this, which implies
		\begin{equation*}
			(D\Lambda_g)_0=(D\Lambda_{cg})_0.
		\end{equation*}
		When $n=3$ we use lemma \ref{lemma_first_lin_identification} together with a boundary determination \cite[Proposition 4.1.]{Guillarmou2011} and when $n>3$ we use \cite[Theorem 1.4]{Krupchyk2018} (we assume that the metric $\hat{g}$ is admissible) to deduce that $X=\tilde{X}$ and hence for all $i=1,\ldots,n-1$
		\begin{equation*}
			\frac{\p_{x_i}\tilde{c}(x',0)}{\tilde{c}(x',0)} = 0,\ \text{for}\ x'\in\Om.
		\end{equation*}
		This then implies two things. Firstly, $\tilde{c}(x',0) = \lambda\in (0,\infty)$. Secondly, since solutions to an advection diffusion equation are unique, we have that $v^l=\tilde{v}^l$.
		
		Now the second order linearization for \eqref{BVP_main} is
		\begin{equation}\label{BVP_second_lin}
			\left\{\begin{array}{ll}
				- \Delta_{\hat{g}}w^{kl} + \frac{1-n}{2c(x',0)} \sum_{i,j=1}^{n-1}\hat{g}^{ij}\p_{x_i}c(x',0)\p_{x_j}w^{kl} +  \frac{n-1}{2c(x',0)}\p_{x_n}^3c(x',0)v^kv^l = 0 & \text{in}\,\, \Om \\
				w^{kl}=0, & \text{on}\,\, \partial\Om. \phantom{\Big|}
			\end{array} \right.
		\end{equation}
		For the metric $\tilde{c}g$ the second linearization is
		\begin{equation}\label{BVP_second_lin_tilde}
			\left\{\begin{array}{ll}
				- \Delta_{\hat{g}}\tilde{w}^{kl} + \frac{1-n}{2c(x',0)} \sum_{i,j=1}^{n-1}\hat{g}^{ij}\p_{x_i}c(x',0)\p_{x_j}\tilde{w}^{kl} + \frac{n-1}{2c(x',0)}\p_{x_n}^3c(x',0)v^kv^l \phantom{\bigg|}\\
				+ \frac{n-1}{2\tilde{c}(x',0)}\p_{x_n}^3\tilde{c}(x',0)v^kv^l = 0 & \text{in}\,\, \Om \\
				\tilde{w}^{kl}=0, & \text{on}\,\, \partial\Om. \phantom{\Big|}
			\end{array} \right.
		\end{equation}
		Here $w^{kl}=\p_{\e_k\e_l}^2u|_{\e=0}$ and $\tilde{w}^{kl}\p_{\e_k\e_l}^2\tilde{u}|_{\e=0}$.
		Fix $x_0'\in\Om$. Let now $v^{(0)}$ be a solution to
		\begin{equation}\label{v_zero}
				\Delta_{\hat{g}}v^{(0)} + Xv^{(0)} + qv^{(0)} = 0  \text{ in}\,\, \Om, 
		\end{equation}
		where $q := \frac{1-n}{2c(x',0)}\left(-\frac{1}{c(x',0)}|\nabla_{\hat{g}}c(x',0)|^2_{\hat{g}} + \p_{x_j}\hat{g}^{ij}\p_{x_i}c(x',0) + \hat{g}^{ij}\p_{x_i}c(x',0)\chr{k}{jk}\right)$, so that $v^{(0)}(x_0') \neq 0$. The existence of such a solution can be shown using Runge approximation (see for example \cite[Proposition A.2]{Lassas2020}). More precisely, there exists a solution $\tilde{v}^{(0)}$ of \eqref{v_zero} in a small neighborhood $U\subset\Om$ of $x_0'$ such that $\tilde{v}^{(0)}(x_0') \neq 0$ \cite[Theorem II5.4.1]{Bers1964}. By Runge approximation there exists a solution $v^{(0)}$ with the desired properties.
		Now subtracting equations \eqref{BVP_second_lin}, \eqref{BVP_second_lin_tilde} and integrating against $v^{(0)}$ gives
		\begin{align}\label{int_id1}
			0 &= \int_{\Om} \left(- \Delta_{\hat{g}}(w^{kl} - \tilde{w}^{kl}) + \frac{1-n}{2c(x',0)}\sum_{i,j=1}^{n-1}\hat{g}^{ij}\p_{x_i}c(x',0)\p_{x_j}(w^{kl} - \tilde{w}^{kl})\right)v^{(0)} \\\notag
			&+ \frac{n-1}{2\tilde{c}(x',0)}\p_{x_n}^3\tilde{c}(x',0)v^kv^lv^{(0)}\,dV_{\hat{g}} \\\notag
			&= \int_{\Om} - (w^{kl} - \tilde{w}^{kl})\Delta_{\hat{g}}v^{(0)} + \frac{1-n}{2c(x',0)}\sum_{i,j=1}^{n-1}\hat{g}^{ij}\p_{x_i}c(x',0)\p_{x_j}(w^{kl} - \tilde{w}^{kl})v^{(0)} \\\notag
			&+ \frac{n-1}{2\tilde{c}(x',0)}\p_{x_n}^3\tilde{c}(x',0)v^kv^lv^{(0)} \,dV_{\hat{g}} + \int_{\p\Om}(w^{kl} - \tilde{w}^{kl})\p_{\nu}v^{(0)} - v^{(0)}\p_{\nu}(w^{kl} - \tilde{w}^{kl}) \,dS.
		\end{align}
		Since the volume form is $dV_{\hat{g}}=|\hat{g}|^{\half}\,dx$, where $|\hat{g}|$ is the determinant of $\hat{g}$, we use integration by parts for the second term in the last equality to have
		\begin{align*}
			&+ \int_{\Om} \frac{1-n}{2c(x',0)}\hat{g}^{ij}\p_{x_i}c(x',0)\p_{x_j}(w^{kl} - \tilde{w}^{kl})v^{(0)}|\hat{g}|^{\half}\,dx \\
			&= - \int_{\Om}\bigg(\frac{1-n}{2c(x',0)}\hat{g}^{ij}\p_{x_i}c(x',0)\p_{x_j}v^{(0)}|\hat{g}|^{\half} \\
			&+ \p_{x_j}\left(\frac{1-n}{2c(x',0)}\hat{g}^{ij}\p_{x_i}c(x',0)|\hat{g}|^{\half}\right)v^{(0)}\bigg)(w^{kl} - \tilde{w}^{kl}) \,dx \\
			&+ \int_{\p\Om}(w^{kl} - \tilde{w}^{kl})\frac{1-n}{2c(x',0)}\hat{g}^{ij}\p_{x_i}c(x',0)|\hat{g}|^{\half}v^{(0)}\nu_j\,dS \\
			&= - \int_{\Om}(w^{kl} - \tilde{w}^{kl})(Xv^{(0)} + qv^{(0)})\,dV_{\hat{g}}.
		\end{align*}
		Here we used that $w^{kl} = \tilde{w}^{kl}=0$ on $\p\Om$. Combining this with \eqref{int_id1} gives
		\begin{align*}
			&\int_{\Om} \frac{n-1}{2\tilde{c}(x',0)}\p_{x_n}^3\tilde{c}(x',0)v^kv^lv^{(0)}\,dV_{\hat{g}} \\
			&= \int_{\Om} (w^{kl} - \tilde{w}^{kl})(\Delta_{\hat{g}}v^{(0)} + Xv^{(0)} + qv^{(0)})\,dV_{\hat{g}} \\
			&- \int_{\p\Om}(w^{kl} - \tilde{w}^{kl})\p_{\nu}v^{(0)} - v^{(0)}\p_{\nu}(w^{kl} - \tilde{w}^{kl}) \,dS \\
			&=0.
		\end{align*}
		In the last equality we used $w^{kl} = \tilde{w}^{kl}=0$ on $\p\Om$ and that applying $\p_{\e_k\e_l}^2|_{\e=0}$ to $\p_{\nu}u|_{\p\Om} = \p_{\nu}\tilde{u}|_{\p\Om}$ implies
		\begin{equation*}
			\p_{\nu}w^{kl}|_{\p\Om} = \p_{\nu}\tilde{w}^{kl}|_{\p\Om}.
		\end{equation*}
		Then by Lemma \ref{lemma_density} we have
		\begin{equation*}
			\frac{n-1}{2\tilde{c}(x',0)}\p_{x_n}^3\tilde{c}(x',0)v^{(0)} = 0 \text{ for } x'\in\Om.
		\end{equation*}
		In particular for $x'=x_0'$ this implies $\p_{x_n}^3\tilde{c}(x_0',0) = 0$ but since $x_0'$ was arbitrary we get
		\begin{equation*}
			\p_{x_n}^3\tilde{c}(x',0) = 0 \text{ for } x'\in\Om.
		\end{equation*}
		Now the boundary value problems \eqref{BVP_second_lin} and \eqref{BVP_second_lin_tilde} are the same and thus by uniqueness of solutions $w^{kl} = \tilde{w}^{kl}$ in $\Om$.
		
		The rest of the proof is done very similarly as in \cite{Nurminen2022} but we record the proof here for completeness.
		Next we use induction to show $\partial_{x_n}^k\tilde{c}(x',0)=0$ for all $k\geq3$. By the above this already holds for $k=3.$ Our assumption now is
		\begin{equation*}\label{induction}
			\partial_{x_n}^k\tilde{c}(x',0)=0,\quad x'\in\Om,\va \text{for all}\quad k=3,\ldots,N\in\N, N>3.
		\end{equation*}
		Let us do a subinduction to prove
		\begin{equation*}
			\partial_{l_1\dots l_k}^ku(x',0)=\partial_{l_1\dots l_k}^k\tilde{u}(x',0),\quad x'\in\,\,\Om,
		\end{equation*}
		for all $k=1,\ldots,N$, where $\partial_{l_1\dots l_k}^k=\frac{\partial^k}{\partial_{\e_{l_1}}\dots\partial_{\e_{l_k}}}$. Above we have shown this for $k=1,2$. Assume that it holds for $k\leq K<N$. Then the linearization of order $K+1$ for the metric $g$ is, when evaluated at $\e_1=\dots=\e_{K+1}=0$,
		\begin{align}\label{subinduction}
			&-\Delta_{\hat{g}}\partial_{l_1\dots l_{K+1}}^{K+1}u(x',0) - X\partial_{l_1\dots l_{K+1}}^{K+1}u(x',0) + R_K(u,g(x',0),0)\\\notag
			&+ \frac{n-1}{2c(x',0)}\partial_{x_n}^{K+2}c(x',0)\left(\prod_{k=1}^{K+1}v^{(l_k)}\right) = 0,
		\end{align}
		$x'\in\,\,\Om$. Here $R$ is a polynomial of components of $g$, the derivatives of $g$ and $\partial_{l_1\dots l_k}^ku_1(x',0)$. Also the linearization of order $K+1$ for the metric $\tilde{c}g$ is
		\begin{align}\label{subinduction_tilde}
			&-\Delta_{\hat{g}}\partial_{l_1\dots l_{K+1}}^{K+1}\tilde{u}(x',0) - X\partial_{l_1\dots l_{K+1}}^{K+1}\tilde{u}(x',0) + R_K(\tilde{u},g(x',0),0)\\\notag
			&+ \left(\frac{n-1}{2c(x',0)}\partial_{x_n}^{K+2}c(x',0) + \frac{n-1}{2\tilde{c}(x',0)}\partial_{x_n}^{K+2}\tilde{c}(x',0)\right)\left(\prod_{k=1}^{K+1}v^{(l_k)}\right) = 0.
		\end{align}
		Here $R_K$ could also have terms with $\partial_{x_n}^k\tilde{c}(x',0)$ and the components of $\nabla_{x'}\left(\partial_{x_n}^k\tilde{c}(x',0)\right)$ but terms containing these are zero by the induction assumption.
		Now an integration by parts argument similar to the case of the second linearization and together with Lemma \ref{lemma_density} (choosing $v^3=\ldots=v^{K+1}=1$) gives $\partial_{x_n}^{K+2}\tilde{c}(x',0)=0$.
		
		Subtracting the equations \eqref{subinduction} and \eqref{subinduction_tilde} we get
		\begin{equation*}
			\left\{\begin{array}{ll}
				-\Delta_{\hat{g}}\left(\partial_{l_1\dots l_{K+1}}^{K+1}u(x',0)-\partial_{l_1\dots l_{K+1}}^{K+1}\tilde{u}(x',0)\right) \\
				- X\left(\partial_{l_1\dots l_{K+1}}^{K+1}u(x',0)-\partial_{l_1\dots l_{K+1}}^{K+1}\tilde{u}(x',0)\right) = 0, & \text{in}\,\, \Om \\
				\partial_{l_1\dots l_{K+1}}^{K+1}u(x',0)-\partial_{l_1\dots l_{K+1}}^{K+1}\tilde{u}(x',0)=0, & \text{on}\,\, \partial\Om. \phantom{\Big|}
			\end{array} \right.
		\end{equation*}
		This is true, since by induction assumptions for all $x'\in\Om$ the other terms agree for $k\leq K$. Again, by the uniqueness of solutions, $\partial_{l_1\dots l_{K+1}}^{K+1}u(x',0)=\partial_{l_1\dots l_{K+1}}^{K+1}\tilde{u}(x',0)$, $x'\in\Om$, which ends the subinduction.
		
		Returning to the original induction, the linearization of order $N+1$ at $\e_1=\dots=\e_{N+1}=0$ for the metric $g$ is
		\begin{align*}
			&-\Delta_{\hat{g}}\partial_{l_1\dots l_{N+1}}^{N+1}u(x',0) - X\partial_{l_1\dots l_{N+1}}^{N+1}u(x',0) + R_N(u,g(x',0),0)\\\notag
			&+ \frac{n-1}{2c(x',0)}\partial_{x_n}^{N+2}c(x',0)\left(\prod_{k=1}^{N+1}v^{(l_k)}\right) = 0,
		\end{align*}
		$x'\in\,\,\Om$
		and for the metric $\tilde{c}g$ we have
		\begin{align}\label{subinduction_tilde}
			&-\Delta_{\hat{g}}\partial_{l_1\dots l_{N+1}}^{N+1}\tilde{u}(x',0) - X\partial_{l_1\dots l_{N+1}}^{N+1}\tilde{u}(x',0) + R_N(\tilde{u},g(x',0),0)\\\notag
			&+ \left(\frac{n-1}{2c(x',0)}\partial_{x_n}^{N+2}c(x',0) + \frac{n-1}{2\tilde{c}(x',0)}\partial_{x_n}^{N+2}\tilde{c}(x',0)\right)\left(\prod_{k=1}^{N+1}v^{(l_k)}\right) = 0.
		\end{align}
		Now the subinduction implies that $R_N(u,g(x',0),0)=R_N(\tilde{u},g(x',0),0)$. Thus by subtracting, integrating against $v^{(0)}$ (the solution of \eqref{v_zero}), using integration by parts and that $\partial_{\nu}\partial_{l_1\dots l_{N+1}}^{N+1}u(x',0)|_{\partial\Om}=\partial_{\nu}\partial_{l_1\dots l_{N+1}}^{N+1}\tilde{u}(x',0)|_{\partial\Om}$ we get
		\begin{equation*}
			\int_{\Om}\tilde{c}(x',0)^{-1}\partial_{x_n}^{N+2}\tilde{c}(x',0)v^{(0)}\prod_{k=1}^{N+1}v^{l_k}\,dx'=0.
		\end{equation*}
		Choosing all but two of the functions $v^{l_k}$ to be equal to $1$, then by the completeness of such solutions (Lemma \ref{lemma_density}) implies that
		\begin{equation*}
			\partial_{x_n}^{N+2}\tilde{c}(x_0',0)=0.
		\end{equation*}
		Since $x_0'$ was arbitrary, we have $\partial_{x_n}^{N+2}\tilde{c}(x',0)=0$ for all $x'\in\Om$.
	\end{proof}

	We will next prove Theorem \ref{main thm_partial}. The proof will be very similar to the one above and we will only point out the differences.
	
	\begin{proof}[Proof of Theorem \ref{main thm_partial}]
		Let $\e$ be, as in the previous proof, small enough so that \eqref{BVP_main} has a unique small solution for $f_{\e}:=\sum_{k=1}^{N}f_k$, where $f_k\in C^s(\p\Om)$ with $\spt(f)\subset \Gamma$ and $\norm{f_k}_{C^s(\p\Om)}< \delta$, $\delta>0$.
		
		The first linearizations are \eqref{BVP_first_lin}, \eqref{BVP_first_lin_tilde} and for the corresponding partial DN maps we have
		\begin{equation}\label{DN_map_equiv_first_partial}
			(D\Lambda_g^{\Gamma})_0=(D\Lambda_{cg}^{\Gamma})_0.
		\end{equation}
		Now using Lemma \ref{lemma_first_lin_identification} together with a boundary determination \cite[Proposition $4.1$]{Tzou2017} we get from the first linearization that for all $i=1,\ldots,n-1$
		\begin{equation*}
			\frac{\p_{x_i}\tilde{c}(x',0)}{\tilde{c}(x',0)} = 0,\ \text{for}\ x'\in\Om.
		\end{equation*}
		This then implies two things. Firstly, $\tilde{c}(x',0) = \lambda\in (0,\infty)$. Secondly, since solutions to an advection diffusion equation are unique, we have that $v^l=\tilde{v}^l$.
		
		Let us move to the second order linearization and let $x_0'\in\Om$. We will subtract equations \eqref{BVP_second_lin}, \eqref{BVP_second_lin_tilde} and integrate against the function $v^{(0)}$ that solves
		\begin{equation}\label{special_solution_partial}
			\left\{\begin{array}{ll}
				\Delta_{\hat{g}}v^{(0)} - Xv^{(0)} - qv^{(0)} = 0 & \text{in}\,\, \Om \\
				v^{(0)}=g, & \text{on}\,\, \Gamma   \\
				v^{(0)}=0, & \text{on}\,\, \partial\Om\setminus\Gamma. \phantom{\Big|}
			\end{array} \right.
		\end{equation}
		for $g\in C^{\infty}_c(\Gamma)$, $g\geq0$, $g\not\equiv0$ and that $v^{(0)}(x_0')\neq0$. This function can be constructed exactly as before. We now do the integration, where the difference will be on how to deal with the boundary terms:
		\begin{align*}
			&\int_{\Om} \frac{n-1}{2\tilde{c}(x',0)}\p_{x_n}^3\tilde{c}(x',0)v^kv^lv^{(0)}\,dV_{\hat{g}} \\
			&= \int_{\Om} -(w^{kl} - \tilde{w}^{kl})(-\Delta_{\hat{g}}v^{(0)} + Xv^{(0)} + qv^{(0)})\,dV_{\hat{g}} \\
			&- \int_{\p\Om}(w^{kl} - \tilde{w}^{kl})\p_{\nu}v^{(0)} - v^{(0)}\p_{\nu}(w^{kl} - \tilde{w}^{kl}) \,dS \\
			&+ \int_{\p\Om}(w^{kl} - \tilde{w}^{kl})\frac{1-n}{2c(x',0)}\hat{g}^{ij}\p_{x_i}c(x',0)|\hat{g}|^{\half}v^{(0)}\nu_j\,dS. \\
		\end{align*}
		Since $v^{(0)}$ solves \eqref{special_solution_partial} and $(w^{kl} - \tilde{w}^{kl})|_{\p\Om}=0$ the first and last integral vanish and also the first term in the second integral vanishes. For the remaining part we divide the integral in two parts
		\begin{equation*}
			\int_{\p\Om}v^{(0)}\p_{\nu}(w^{kl} - \tilde{w}^{kl}) \,dS = \int_{\Gamma}v^{(0)}\p_{\nu}(w^{kl} - \tilde{w}^{kl}) \,dS + \int_{\p\Om\setminus\Gamma}v^{(0)}\p_{\nu}(w^{kl} - \tilde{w}^{kl}) \,dS.
		\end{equation*}
		This is zero since $v^{(0)}|_{\p\Om\setminus\Gamma}=0$ and from \eqref{DN_map_equiv_first_partial} we have that $\p_{\nu}(w^{kl} - \tilde{w}^{kl})|_{\Gamma}=0$. Hence
		\begin{equation*}
			\int_{\Om} \frac{n-1}{2\tilde{c}(x',0)}\p_{x_n}^3\tilde{c}(x',0)v^kv^lv^{(0)}\,dV_{\hat{g}}=0
		\end{equation*}
		for all $v_k$, $v^l$ solving \eqref{BVP_first_lin}. Continuing as in the proof of Theorem \ref{main thm} we get
		\begin{equation*}
			\p_{x_n}^3\tilde{c}(x',0) = 0 \text{ for } x'\in\Om.
		\end{equation*}
		and this then implies that $w^{kl} = \tilde{w}^{kl}$ in $\Om$.
		
		Proceeding with induction as before and taking further care about the boundary terms in the integrals will conclude the proof.
	\end{proof}

	\printbibliography
\end{document}